\newtheorem{theorem}{Theorem}[section]
\newtheorem{lemma}{Lemma}[section]
\newtheorem{corollary}{Corollary}[section]
\newtheorem{openquestion}{Open question}[section]
\begin{document}
\title{On the super edge-magicness of graphs of equal order and size}
\author{S. C. L\'opez}
\address{%
Departament de Matem\`{a}tica Aplicada IV\\
Universitat Polit\`{e}cnica de Catalunya. BarcelonaTech\\
C/Esteve Terrades 5\\
08860 Castelldefels, Spain}
\email{susana@ma4.upc.edu}

\author{F. A. Muntaner-Batle}
\address{Graph Theory and Applications Research Group \\
 School of Electrical Engineering and Computer Science\\
Faculty of Engineering and Built Environment\\
The University of Newcastle\\
NSW 2308
Australia}
\email{famb1es@yahoo.es}

\author{M. Prabu}
\address{British University Vietnam\\
Hanoi, Vietnam}
\email{mprabu201@gmail.com}

\maketitle

\begin{abstract}
The super edge-magicness of graphs of equal order and size has been shown to be important since such graphs can be used as seeds to answer many questions related to (super) edge-magic labelings and other types of well studied labelings, as for instance harmonious labelings. Also other questions related to the area of combinatorics can be attacked and understood from the point of view of super edge-magic graphs of equal order and size. For instance, the design of Steiner triple systems, the study of the set of dual shuffle primes and the Jacobsthal numbers. In this paper, we study the super edge-magic properties of some types of super edge-magic graphs of equal order and size, with the hope that they can be used later in the study of other related questions. The negative results found in last section are specially interesting since these kind of results are not common in the literature. Furthermore, the few results found in this direction usually meet one of the following reasons: too many vertices compared with the number of edges; too many edges compared with the number of vertices; or parity conditions. In this case, all previous reasons fail in our results.
\end{abstract}

\begin{quotation}
\noindent{\bf Key Words}: {Edge-magic, super edge-magic, magic sum,$\otimes_h$-product}

\noindent{\bf 2010 Mathematics Subject Classification}:  Primary 05C78,
   Se\-con\-dary       05C76
\end{quotation}

\section{Introduction} \label{Section Introduction}
All graphs contained in this paper may contain loops, however multiple edges are not allowed. Also, in order to make this paper reasonably self-contained, we mention that by a $(p,q)$-graph we mean a graph of order $p$ and size $q$. For integers $m\le n$, we use $[m,n]$ to denote $\{m,m+1,\ldots, n\}$.
In 1970, Kotzig and Rosa \cite{KotRos70} introduced the concepts of edge-magic graphs and edge-magic labelings as follows: Let $G$ be a $(p,q)$-graph. Then $G$ is called {\it edge-magic} if there is a bijective function $f:V(G)\cup E(G)\rightarrow [1,p+q]$ and a constant $k$ such that the sum $f(x)+f(xy)+f(y)=k$ for any $xy\in E(G)$. Such a function is called an {\it edge-magic labeling} of $G$ and $k$ is called the {\it valence} \cite{KotRos70} or the {\it magic sum} \cite{Wa} of the labeling $f$.

A restriction of the concept of edge-magic graphs and labelings was provided in 1998 by Enomoto et al. in \cite{E}. Let $f:V(G)\cup E(G) \rightarrow [1,p+q]$ be an edge-magic labeling of a $(p,q)$-graph G with the extra property that $f(V(G))= [1,p].$ Then G is called { \it super edge-magic} and $f$ is a {\it super edge-magic labeling} of $G$. Super edge-magic labelings of graphs of equal order and size have proven to be very important, since many relations with other problems as well as with other labelings have been established in the literature. For instance, super edge-magic labelings constitute a powerful link among other types of labelings \cite{F2,ILMR,LopMunRiu1,LopMunRiu8}. But also, (super) edge-magic labelings of two regular graphs have been proven to have many relations with other combinatorial problems. For instance, with Skolem and Langford type sequences \cite{LopMun15.a}, and hence with Steiner triple systems \cite{skolem} and also with dual shuffle primes and sequences of Jacobsthal numbers \cite{LopMunRiu13b}.

Many of the relations that have been enumerated above are possible thanks to the following product introduced originally in \cite{F1}, and that it is in fact a generalization of the Kronecker product (also known as tensor product, see \cite{HamImrKla} for other names) for digraphs. Let $D$ be a digraph and let $\Gamma$ be a family of digraphs with the same set $V$ of vertices. Assume that $h: E(D) \to \Gamma$ is any function that assigns elements of $\Gamma$ to the arcs of $D$. Then the digraph $D \otimes _{h} \Gamma$  is defined by (i) $V(D \otimes _{h} \Gamma)= V(D) \times V$ and (ii) $((a,i),(b,j)) \in E(D \otimes _{h} \Gamma) \Leftrightarrow (a,b) \in E(D)$ and $(i,j) \in E(h(a,b))$. Note that when $h$ is constant, $D \otimes _{h} \Gamma$ is the Kronecker product.

It is obvious that in order to use the $\otimes _{h}$-product, we need to deal with digraphs rather than with graphs. Therefore, the following definition that already appeared implicitly in \cite{F1} is necessary. We say that a digraph $D$ admits a labeling $\lambda$ if $und(D)$ admits $\lambda$, where $und(D)$ denotes the underlying graph of $D$.

The following results will be proven to be useful in the rest of the paper.

\begin{lemma}\label{super_consecutive} \cite{F2}
Let $G$ be a $(p,q)$-graph. Then $G$ is super edge-magic if and only if  there is a
bijective function $g:V(G)\to  [1,p]$ such
that the set $S=\{g(u)+g(v):uv\in E(G)\}$ is a set of $q$
consecutive integers.
\end{lemma}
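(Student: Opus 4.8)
The plan is to prove both implications directly, passing between an edge-magic labeling $f$ and its restriction $g$ to the vertices by exploiting the linear relation $f(uv) = k - f(u) - f(v)$ that the magic condition forces on every edge.

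For the forward implication, I would start from a super edge-magic labeling $f: V(G) \cup E(G) \to [1,p+q]$ with $f(V(G)) = [1,p]$ and magic sum $k$, and set $g = f|_{V(G)}$, which is already a bijection onto $[1,p]$. Since $f(uv) = k - g(u) - g(v)$ for every edge and $f$ restricted to $E(G)$ is a bijection onto $[p+1, p+q]$, the values $g(u)+g(v) = k - f(uv)$ are pairwise distinct and run exactly through $\{k-(p+q), \ldots, k-(p+1)\}$. Hence $S = \{g(u)+g(v) : uv \in E(G)\}$ is a set of exactly $q$ consecutive integers.

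Conversely, given a bijection $g: V(G) \to [1,p]$ with $S = [m, m+q-1]$ for some integer $m$, I would extend $g$ to a function $f$ on $V(G)\cup E(G)$ by keeping $f|_{V(G)} = g$ and setting $f(uv) = p + m + q - g(u) - g(v)$ for each edge $uv$. Then $f(u) + f(uv) + f(v) = p+m+q$ is constant over all edges, so the magic equation holds automatically, with prospective magic sum $k = p+m+q$, as soon as $f$ is shown to be the right kind of bijection.

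The one point that needs care — and the step I would regard as the crux — is checking that $f$ is a bijection onto $[1,p+q]$ with $f(V(G)) = [1,p]$. The vertex part is immediate. For the edges, as $g(u)+g(v)$ ranges over $[m, m+q-1]$ the quantity $f(uv) = p+m+q-(g(u)+g(v))$ ranges over $[p+1, p+q]$, and since $|S| = q$ the sums $g(u)+g(v)$ are all distinct, so the labels $f(uv)$ are distinct as well. This makes $f$ a super edge-magic labeling of $G$ and finishes the proof; no real obstacle arises beyond the bookkeeping with the two intervals $[1,p]$ and $[p+1,p+q]$.
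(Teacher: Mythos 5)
Your proof is correct, and it is the standard argument for this equivalence: the paper itself states Lemma~\ref{super_consecutive} as a cited result from \cite{F2} without reproducing a proof, and the argument given there is essentially yours (restrict $f$ to the vertices in one direction; in the other, extend $g$ by $f(uv)=p+q+m-g(u)-g(v)$ and check bijectivity onto $[p+1,p+q]$). You correctly handle the one delicate point, namely that $|S|=q$ forces the edge sums to be pairwise distinct, so nothing is missing.
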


The next result is a direct consequence of Lemma \ref{super_consecutive}.
\begin{lemma} \label{properties_SEM}
Let $D$ be a super edge-magic digraph in which each vertex is identified by the labels assigned by a super edge-magic labeling. Then the adjacency matrix $A(D)$ has the following properties.
\begin{enumerate}
    \item[(i)] Each counterdiagonal contains all 0's or all 0's except one 1.
    \item[(ii)] The set of counterdiagonals containing 1's in $A(D)$ is a set of consecutive diagonals.
\end{enumerate}
\end{lemma}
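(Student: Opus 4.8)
The plan is to read off both properties directly from Lemma~\ref{super_consecutive}. First I would fix a super edge-magic labeling of $D$ and use it to identify $V(D)$ with $[1,p]$, so that the rows and columns of $A(D)$ are indexed by $1,\ldots,p$ and the $(i,j)$-entry equals $1$ exactly when $(i,j)$ is an arc of $D$. The entries on the counterdiagonal of index $c$ are precisely the entries $A(D)_{ij}$ with $i+j=c$ (where $c$ ranges over $2,\ldots,2p$); thus a $1$ on this counterdiagonal records an arc of $D$ whose two endpoint labels sum to $c$. With this dictionary in hand, both claims become statements about the set $S=\{\,i+j:(i,j)\in E(D)\,\}$, which under the identification is exactly the set $\{g(u)+g(v):uv\in E(und(D))\}$ appearing in Lemma~\ref{super_consecutive}.

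For (i), Lemma~\ref{super_consecutive} gives that $S$ consists of $q$ consecutive integers; in particular $|S|=q$. Since $und(D)$ has $q$ edges and no multiple edges, the arcs of $D$ are in bijection with the edges of $und(D)$, and hence the assignment $(i,j)\mapsto i+j$ is injective on $E(D)$. (Two distinct arcs could a priori yield the same value of $i+j$ only by being $(i,j)$ and $(j,i)$, but this is exactly a multiple edge of $und(D)$ and is therefore excluded; a loop $(i,i)$ sits in the single entry on counterdiagonal $2i$ and contributes the single value $2i$, so it causes no trouble either.) Injectivity says that no value $c$ is attained by two different arcs, i.e.\ no counterdiagonal of $A(D)$ contains more than one $1$, which is precisely (i).

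For (ii), observe that the counterdiagonal of index $c$ contains a $1$ if and only if some arc $(i,j)$ satisfies $i+j=c$, i.e.\ if and only if $c\in S$; hence the set of indices of the counterdiagonals carrying a $1$ equals $S$, which is a block of consecutive integers by Lemma~\ref{super_consecutive}. I do not anticipate a genuine obstacle: once $V(D)$ is identified with $[1,p]$, the statement is little more than a rephrasing of Lemma~\ref{super_consecutive} in matrix language, and the only point requiring care is the bookkeeping above — namely that it is the absence of multiple edges (rather than mere well-definedness on unordered pairs) that makes the label-sum map injective, and that loops contribute exactly one matrix entry and one value to $S$.
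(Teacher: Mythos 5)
Your proposal is correct and matches the paper's intent: the paper gives no written proof, simply declaring the lemma a direct consequence of Lemma~\ref{super_consecutive}, and your argument (counterdiagonal $c$ holds the arcs with label sum $c$, the $q$ arcs map onto the $q$-element consecutive set $S$, hence injectively) is exactly the omitted verification.
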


Although the definitions of (super)edge-magic graphs and the original Lemma \ref{super_consecutive} in \cite{F2} were established for simple graphs (that is to say, graphs without loops or multiple edges), it works exactly the same for graphs with loops.
From now on, whenever we talk about super edge-magic labelings in this paper, we will refer to labelings with the property provided in Lemma \ref{super_consecutive}, unless otherwise specified.

Let $f$ be 
a super edge-magic labeling $f$ of a graph $G$. The \textit{super edge-magic complementary labeling}, $f^{c}$ is the labeling defined by the rule,
$ f^{c}(x) = p+1-f(x),$ for all $x\in V(G)$. Notice that, the labeling $f^c$ is also super edge-magic. The next lemma is an easy observation.

\begin{lemma} \label{complementary_property}
Let $D$ be a digraph and $f$ be a super edge-magic labeling of $D$. Let $A(D_f)$ denote the adjacency matrix of $D$ where each vertex takes the name of their labels in $f$. Then the matrix $A(D_{f^c})$ is a $\pi$ radians clockwise rotation of $A(D_f)$.
\end{lemma}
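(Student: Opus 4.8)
The plan is to unwind the definitions and follow a single arc through the relabeling. Index the rows and columns of $A(D_f)$ by $1,2,\ldots,p$, so that the $(i,j)$ entry of $A(D_f)$ is $1$ precisely when $D$ contains an arc from the vertex carrying label $i$ under $f$ to the vertex carrying label $j$ under $f$, and $0$ otherwise; loops are allowed, so the main diagonal may be nonzero, but this causes no difficulty. The first step is to record the effect of passing from $f$ to $f^c$ on the names of vertices: since $f^c(x)=p+1-f(x)$ by definition, the vertex that is named $i$ in $A(D_f)$ is named $p+1-i$ in $A(D_{f^c})$. Hence an arc counted by the $(i,j)$ entry of $A(D_f)$ is exactly the arc counted by the $(p+1-i,\,p+1-j)$ entry of $A(D_{f^c})$, which gives the key identity $(A(D_{f^c}))_{i,j}=(A(D_f))_{p+1-i,\,p+1-j}$ for all $i,j\in[1,p]$.

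The second step is to recognise the index substitution $(i,j)\mapsto(p+1-i,\,p+1-j)$ as precisely what a rotation by $\pi$ radians does to a square matrix: it reverses the order of the rows and, simultaneously, reverses the order of the columns. Writing $J$ for the backward identity (the permutation matrix with $1$'s on the counterdiagonal), the matrix $J\,M\,J$ has $(i,j)$ entry $M_{p+1-i,\,p+1-j}$, so the identity from the first step reads $A(D_{f^c})=J\,A(D_f)\,J$, i.e.\ $A(D_{f^c})$ is the half-turn of $A(D_f)$. Since for a half-turn the clockwise and counterclockwise rotations coincide, there is nothing further to check regarding orientation, and the lemma follows.

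There is essentially no hard step here: the statement is a bookkeeping fact, and the only place to be careful is fixing once and for all a consistent convention for how the vertices label the rows and columns of the adjacency matrix and then not dropping a ``$p+1-$'' along the way. As a sanity check consistent with Lemma~\ref{super_consecutive} and Lemma~\ref{properties_SEM}, one may also note that each edge sum $g(u)+g(v)$ is sent by $f^c$ to $2(p+1)-\bigl(g(u)+g(v)\bigr)$, so a consecutive block of occupied counterdiagonals is reflected to another consecutive block of the same length, exactly as a half-turn of the matrix would predict; this observation is not needed for the proof but explains why the rotated matrix is again the adjacency matrix of a super edge-magic labeling.
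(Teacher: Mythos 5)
Your proof is correct, and it fills in exactly the bookkeeping that the paper omits: the paper states this lemma without proof, calling it ``an easy observation,'' and the intended argument is precisely your identity $(A(D_{f^c}))_{i,j}=(A(D_f))_{p+1-i,\,p+1-j}$, i.e.\ $A(D_{f^c})=J\,A(D_f)\,J$, which is the half-turn of the matrix. Nothing further is needed.
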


Next we need the following theorem.

\begin{theorem} \cite{LopMunRiu8} \label{otimes_product_G_and_snk}
Let $D$ be a (super) edge-magic digraph and let $S_n^k$ be the set of all super edge-magic labeled digraphs of order and size $n$ with minimum induced sum $k$. Assume that $h: E(D)\to S_n^k$. Then the graph $und(D \otimes_h S_n^k)$ is (super) edge-magic.
\end{theorem}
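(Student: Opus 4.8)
The plan is to produce, from the given data, a vertex labeling of $und(D\otimes_h S_n^k)$ satisfying the criterion of Lemma~\ref{super_consecutive}. I will carry out the super edge-magic case in detail and then indicate how the edge-magic case follows along the same lines.

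Write $D$ for a super edge-magic $(p,q)$-digraph. By Lemma~\ref{super_consecutive} applied to $und(D)$ there is a bijection $g:V(D)\to[1,p]$ for which $\{g(u)+g(v):uv\in E(D)\}=[a,a+q-1]$ for some integer $a$; in particular the $q$ arcs of $D$ realize $q$ \emph{distinct} sums. Every $H\in S_n^k$ is a super edge-magic digraph of order and size $n$ whose vertex set we may take to be $[1,n]$ (vertices named by their labels) and whose defining feature is that $\{i+j:(i,j)\in E(H)\}=[k,k+n-1]$, the \emph{same} interval for all $H\in S_n^k$. On $V(D\otimes_h S_n^k)=V(D)\times[1,n]$ I would define
$$\phi(u,i)=n\bigl(g(u)-1\bigr)+i .$$
As $g(u)$ runs over $[1,p]$ and $i$ over $[1,n]$, the quantity $\phi(u,i)$ runs bijectively over $[1,np]$, so $\phi$ is an admissible candidate for the vertex part of a super edge-magic labeling.

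Next I would compute the induced edge sums. A product arc $((u,i),(v,j))$ exists precisely when $uv\in E(D)$ and $(i,j)\in E(h(uv))$, and then
$$\phi(u,i)+\phi(v,j)=n\bigl(g(u)+g(v)\bigr)-2n+(i+j)=n(s-2)+t,$$
where $s:=g(u)+g(v)\in[a,a+q-1]$ and $t:=i+j\in[k,k+n-1]$. For each fixed $s$ the numbers $n(s-2)+t$ form the block of $n$ consecutive integers $[\,n(s-2)+k,\ n(s-2)+k+n-1\,]$, and increasing $s$ by $1$ shifts this block by exactly $n$; hence the blocks for $s=a,a+1,\dots,a+q-1$ tile, with neither gap nor overlap, the interval $[\,n(a-2)+k,\ n(a-2)+k+nq-1\,]$ of $nq$ consecutive integers. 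Since $D\otimes_h S_n^k$ has exactly $qn$ arcs (each of the $q$ arcs of $D$ contributes the $n$ arcs of its factor), the $qn$ induced sums are pairwise distinct and consecutive, so Lemma~\ref{super_consecutive} yields that $und(D\otimes_h S_n^k)$ is super edge-magic.

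For the edge-magic (not necessarily super) case one argues in the same spirit: starting from an edge-magic labeling $f$ of $D$ with valence $c$ together with the super edge-magic labelings of the factors $h(uv)\in S_n^k$, one interleaves them, using $f$ on the $D$-coordinate and the factor labelings on the second coordinate to label both vertices and edges of the product, and checks that every product edge receives a common valence. The point requiring care — and the main obstacle throughout — is precisely this tiling bookkeeping: it hinges on every member of $S_n^k$ having the \emph{same} induced-sum interval $[k,k+n-1]$ (so the superscript $k$ cannot be dropped) and on the $q$ arcs of $D$ realizing $q$ consecutive sums, so that the $q$ length-$n$ blocks fit together into a single interval of length $qn$.
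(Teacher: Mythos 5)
This theorem is imported from \cite{LopMunRiu8} and the paper gives no proof of it, so there is nothing internal to compare against; judged on its own terms, your super edge-magic half is correct and is exactly the standard argument behind this result. The labeling $\phi(u,i)=n(g(u)-1)+i$ is a bijection onto $[1,np]$, each arc $uv$ of $D$ has a distinct sum $s=g(u)+g(v)$ ranging over $q$ consecutive integers, each factor $h(uv)\in S_n^k$ contributes $n$ distinct sums $t\in[k,k+n-1]$, and the blocks $n(s-2)+[k,k+n-1]$ tile an interval of length $nq$; Lemma~\ref{super_consecutive} then applies. Your closing remark correctly identifies why the common minimum induced sum $k$ is indispensable. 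The only soft spot is the edge-magic half, which you merely gesture at: to close it you must actually exhibit the edge labels, e.g.\ give the arc $((u,i),(v,j))$ the label $n(f(uv)-1)+(\ell_{uv}(ij)-n)$, where $f$ is the edge-magic labeling of $D$ with valence $c$ and $\ell_{uv}$ is the edge-labeling of $h(uv)$, and then check that every product edge has valence $n(c-2)+k-n\cdot 0$ --- more precisely $nc-2n+k$ --- using the fact that every member of $S_n^k$, being a super edge-magic labeled graph of equal order and size $n$ with minimum induced sum $k$, has the \emph{same} valence $k+2n$. That uniform-valence fact is the second place where the superscript $k$ is essential, and it deserves to be stated rather than left implicit; with it, the edge-magic case is a routine computation and your proof is complete.
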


Theorem \ref{otimes_product_G_and_snk} allows us to use super edge-magic labeled (di)graphs of equal order and size as seeds in order to get new families of super edge-magic labeled (di)graphs.
We conclude this introduction by mentioning that super edge-magic graphs had been known since 1991 when Acharya and Hegde had introduced in \cite{AH} the concept of strongly indexable graph, which turns out to be equivalent to the concept of super edge-magic graph. For further information in graph labelings the interested reader can consult \cite{BaMi,G,MhMi,SlMb,Wa}.

The organization of the paper is as follows: we start by presenting different families of graphs with equal order and size which are super edge-magic, this is the content of Section \ref{section: familes SEM}. In Section \ref{section: familes NOT_SEM}, we consider few families of graphs with equal order and size and prove that they are not super edge-magic. Finally, we end by a short section of conclusion and open questions.

\section{Families of super edge-magic graphs of equal order and size}
\label{section: familes SEM}
We begin this section by providing some families of super edge-magic graphs of equal order and size. Then we will use these families in order to get other families of super edge-magic graphs. We denote by $LK_{1,n}$, the graph formed by a star $K_{1,n}$ with a loop attached at its central vertex. The vertex with the loop in $LK_{1,n}$ is called the central vertex.

\begin{theorem} \label{2LK(1,1)_union_LK(1,n)}
The graph $2LK_{1,1} \cup LK_{1,n}$ is super edge-magic for all $ n \in \mathbb{N}$.
\end{theorem}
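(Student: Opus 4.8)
The plan is to apply Lemma \ref{super_consecutive} directly. Note first that $LK_{1,n}$ is an $(n+1,n+1)$-graph and $LK_{1,1}$ is a $(2,2)$-graph, so $G=2LK_{1,1}\cup LK_{1,n}$ has order and size both equal to $n+5$; recall also that a loop at a vertex $w$ contributes $2g(w)$ to the sum set. Hence it suffices to exhibit a bijection $g\colon V(G)\to[1,n+5]$ whose set of edge sums is a set of $n+5$ consecutive integers.

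I would use the following labeling. Assign the label $3$ to the central vertex of $LK_{1,n}$ and the labels $6,7,\dots,n+5$ to its $n$ leaves; assign the labels $2$ and $4$ to the two central vertices of the copies of $LK_{1,1}$, and the labels $5$ and $1$ to their respective leaves, matched so that the vertex labelled $2$ is adjacent to the one labelled $5$ and the vertex labelled $4$ is adjacent to the one labelled $1$. This is a bijection onto $[1,n+5]$. Computing the edge sums: the loop at the centre of $LK_{1,n}$ gives $6$; the $n$ spokes give $9,10,\dots,n+8$; the two loops of the small stars give $2\cdot 2=4$ and $2\cdot 4=8$; and the two remaining edges give $2+5=7$ and $4+1=5$. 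Therefore the set of edge sums equals $\{4,5,\dots,n+8\}$, a set of $n+5$ consecutive integers, and Lemma \ref{super_consecutive} yields that $G$ is super edge-magic.

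I expect the only genuine obstacle to be finding this assignment, not checking it. A quick parity count shows that the four edge sums produced by $2LK_{1,1}$ can never on their own be four consecutive integers (in particular, $2LK_{1,1}$ is itself not super edge-magic), so one cannot merely juxtapose a super edge-magic labeling of $2LK_{1,1}$ with one of $LK_{1,n}$: the four sums coming from the two small stars must interleave with and flank the run of sums produced by $LK_{1,n}$. The key realization is that labelling the big centre by $3$ makes $LK_{1,n}$ contribute the sums $6$ and $9,10,\dots,n+8$, a run having a gap at $\{7,8\}$ and missing the two values $4,5$ just below it; the two copies of $LK_{1,1}$ can then be labelled, as above, so that their loops and edges realize exactly $\{4,5,7,8\}$. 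The same labeling works verbatim for $n=0$ should one wish to include it.
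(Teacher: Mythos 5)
Your proof is correct and follows essentially the same route as the paper: both exhibit an explicit bijection $g\colon V(G)\to[1,n+5]$ and verify via Lemma \ref{super_consecutive} that the induced edge sums form $n+5$ consecutive integers (your labeling, with centres $2,4,3$, differs from the paper's, with centres $3,5,4$, only in the concrete choice of labels, and your computation of the sum set $\{4,\dots,n+8\}$ checks out). The added remark that $2LK_{1,1}$ alone is not super edge-magic, so the sums must interleave, is a nice observation but not needed for the argument.
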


\begin{proof}
Label the central vertex of one component $LK_{1,1}$ with $3$ and the other vertex of this component with $6$. Then label the central vertex of the other component isomorphic to $LK_{1,1}$ with $5$ and the other vertex vertex of this component with $2$. Finally, label the central vertex of the component isomorphic to $LK_{1,n}$ with $4$, and the vertices of degree 1 in $LK_{1,n}$ with the remaining labels in $[1,n+5]$. Then it is not hard to see that the resulting labeling is super edge-magic.\end{proof}

We illustrate the labeling of Theorem. \ref{2LK(1,1)_union_LK(1,n)} in Fig. \ref{Fig 1}.

\begin{figure}[h]
\begin{center}
\includegraphics {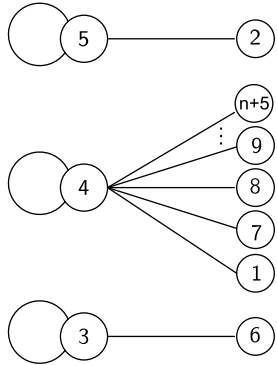}
\caption{A super edge-magic labeling of $2LK_{1,1} \cup LK_{1,n}$.}
\label{Fig 1}
\end{center}
\end{figure}

Theorem \ref{2LK(1,1)_union_LK(1,n)} can be generalized as follows.

\begin{theorem} \label{2LK(1,m)_union_LK(1,n)}
The graph $2LK_{1,m} \cup LK_{1,n}$ is super edge-magic for all $m,n \in \mathbb{N}$.
\end{theorem}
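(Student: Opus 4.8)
The plan is to prove the statement directly from Lemma~\ref{super_consecutive}: I will exhibit a bijection $g\colon V(2LK_{1,m}\cup LK_{1,n})\to[1,2m+n+3]$ whose set of induced sums (a loop at $v$ contributing $2g(v)$) is a block of $2m+n+3$ consecutive integers; since the graph has $p=q=2m+n+3$, this automatically makes all the sums distinct. The labeling will be the natural generalization of the one in Theorem~\ref{2LK(1,1)_union_LK(1,n)} (Fig.~\ref{Fig 1}). Assign to the central vertices $c_1,c_2$ of the two copies of $LK_{1,m}$ the labels $g(c_1)=3$, $g(c_2)=5$, and to the central vertex $c_3$ of $LK_{1,n}$ the label $g(c_3)=4$; the three loops then contribute the sums $6$, $10$ and $8$.

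For the remaining vertices I would give the $m$ leaves of $c_1$ the even labels $6,8,\dots,2m+4$, the $m$ leaves of $c_2$ the label $2$ together with the odd labels $7,9,\dots,2m+3$, and the $n$ leaves of $c_3$ the label $1$ together with the top block $2m+5,2m+6,\dots,2m+n+3$. A routine count shows these sets have sizes $m$, $m$ and $n$ and, together with $\{3,4,5\}$, partition $[1,2m+n+3]$, so $g$ is a bijection (when $m=1$ the two progressions degenerate to $\{6\}$ and $\{2\}$, recovering exactly the labeling of Theorem~\ref{2LK(1,1)_union_LK(1,n)}).

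Next I would compute the leaf-edge sums: $c_1$ gives $3+\{6,8,\dots,2m+4\}=\{9,11,\dots,2m+7\}$; $c_2$ gives $5+2=7$ and $5+\{7,9,\dots,2m+3\}=\{12,14,\dots,2m+8\}$; and $c_3$ gives $4+1=5$ and $4+\{2m+5,\dots,2m+n+3\}=\{2m+9,\dots,2m+n+7\}$. Together with the loop sums $6,8,10$ one then checks that $\{5,6,7,8,9,10\}$ is covered (respectively by the edge from $c_3$ to the leaf $1$, the loop at $c_1$, the edge from $c_2$ to the leaf $2$, the loop at $c_3$, the smallest leaf-edge at $c_1$, and the loop at $c_2$), that the odd values $11,13,\dots,2m+7$ and the even values $12,14,\dots,2m+8$ interlock to fill $[11,2m+8]$, and that the top block supplies $[2m+9,2m+n+7]$; altogether the sum set is $[5,2m+n+7]$, and Lemma~\ref{super_consecutive} completes the argument.

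The step needing the most care is precisely this last interleaving: one must verify that the three forced-even loop sums $6,8,10$ fit among the small leaf sums with no collision, and that the two opposite-parity arithmetic progressions produced by the leaves of $c_1$ and of $c_2$ really do dovetail into one contiguous interval — which is exactly why the mid-range labels are split into the evens (at $c_1$) and the odds (at $c_2$). I would also explicitly check the degenerate ranges ($m=1$, or $n=1$ where the top block is empty), but these collapse without incident.
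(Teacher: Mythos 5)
Your proposal is correct: the labeling is a genuine bijection onto $[1,2m+n+3]$ and the induced sums form the consecutive interval $[5,2m+n+7]$, so Lemma~\ref{super_consecutive} applies, and this is essentially the paper's approach --- the paper likewise extends the labeling of Theorem~\ref{2LK(1,1)_union_LK(1,n)} to an explicit super edge-magic labeling of $2LK_{1,m}\cup LK_{1,n}$, differing only in bookkeeping (it assigns the new leaves temporary nonpositive labels of alternating parity and then shifts all labels by $2m-2$, whereas you interleave the new leaf labels by parity in the mid-range directly).
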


\begin{proof}
In order to label $2LK_{1,m} \cup LK_{1,n}$, consider the labeling of $2LK_{1,1} \cup LK_{1,n}$ obtained in the proof of Theorem \ref{2LK(1,1)_union_LK(1,n)}. Add $m-1$ new edges with their corresponding vertices of degree 1 attached to the central vertex of each of the two components that are originally isomorphic to $LK_{1,1}$. Label the new vertices of the component that has the central vertex labeled 5 with the numbers $-1,-3,-5,\ldots,-(2m-3)$. Label the remaining vertices (that is to say the new vertices in the component with the central vertex labeled 3) with the numbers $0,-2,-4,-6,\ldots,-(2m-4)$. Then it is easy to check that by adding $(2m-2)$ to each of the original labels, we obtain a super edge-magic labeling of the graph $2LK_{1,m} \cup LK_{1,n}$.
\end{proof}

We illustrate the procedure of the above proof in Fig. \ref{Fig 2LK13_LK14}.
\begin{figure}[h]
\begin{center}
\includegraphics {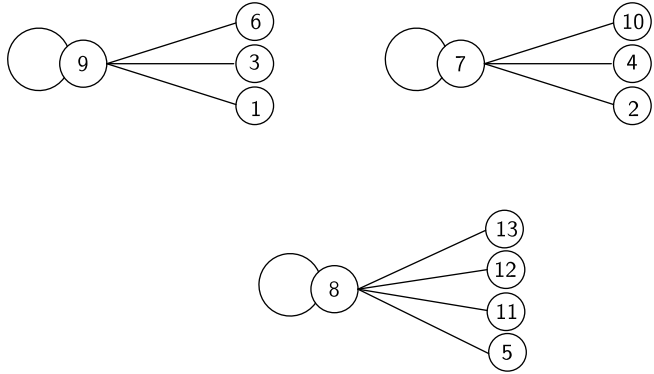}
\caption{A super edge-magic labeling of $2LK_{1,3} \cup LK_{1,4}$.}
\label{Fig 2LK13_LK14}
\end{center}
\end{figure}

Notice that, when $n=m$, we get that $3LK_{1,n}$ is super edge-magic for all $ n \in \mathbb{N}$. This fact can be generalized as follows.

\begin{theorem} \label{(2s+1)LK(1,n)SEM}
The graph $(2s+1)LK_{1,n}$ is super edge-magic for all $ n \in \mathbb{N}$ and $s \in \mathbb{N} \cup \{ 0 \}$.
\end{theorem}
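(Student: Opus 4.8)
The plan is to exhibit an explicit super edge-magic labeling of $(2s+1)LK_{1,n}$ by iterating the trick used in Theorem~\ref{2LK(1,m)_union_LK(1,n)}: there we passed from $2LK_{1,1}\cup LK_{1,n}$ to $2LK_{1,m}\cup LK_{1,n}$ by grafting extra pendant vertices onto two of the three central vertices and absorbing the resulting negative labels by a global shift. Here I would instead build the graph one new copy of $LK_{1,n}$ at a time, starting from the base case $3LK_{1,n}$ (which is already known to be super edge-magic, being the $m=n$ instance of Theorem~\ref{2LK(1,m)_union_LK(1,n)}), and show that adding two further copies of $LK_{1,n}$ preserves super edge-magicness; an induction on $s$ then finishes the argument since every $(2s+1)LK_{1,n}$ is reached from $3LK_{1,n}$ in $s-1$ such steps.

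The key steps, in order, are as follows. First, I would fix a super edge-magic labeling $g$ of $(2(s-1)+1)LK_{1,n}$ on vertex set $[1,p]$ with $p=(2s-1)(n+1)$, recording via Lemma~\ref{super_consecutive} that the edge-sum set $S=\{g(u)+g(v):uv\in E\}$ is a block of $q=p$ consecutive integers, say $[\alpha,\alpha+p-1]$. Second, I would adjoin two new copies of $LK_{1,n}$; call their central vertices $c_1,c_2$ and their pendant vertices $u^{(t)}_1,u^{(t)}_2$ for $t\in[1,n]$. Third, I would assign provisional integer labels to the $2(n+1)$ new vertices — for instance giving $c_1,c_2$ two labels straddling the current label range and the pendant vertices an interleaved run of negative (and large) integers chosen so that each new loop-sum $2g(c_i)$ and each new edge-sum $g(c_i)+g(u^{(t)}_i)$ slots exactly into a consecutive extension of $[\alpha,\alpha+p-1]$, extending it by the required $2(n+1)$ new values with no collisions and no gaps. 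Fourth, a single global shift (adding a fixed constant, as in Theorem~\ref{2LK(1,m)_union_LK(1,n)}) turns the provisional labels into a genuine bijection onto $[1,p+2(n+1)]$ while translating the whole edge-sum set rigidly, so by Lemma~\ref{super_consecutive} the enlarged graph is super edge-magic. Fifth, I would verify the base case $s=1$ directly (the graph $3LK_{1,n}$, already handled) and conclude by induction, also covering $s=0$ trivially since $LK_{1,n}$ is easily seen to be super edge-magic.

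The main obstacle I anticipate is the bookkeeping in the third step: one must choose the $2(n+1)$ new labels — two central labels and two interleaved families of pendant labels, of opposite parities, exactly as the $-1,-3,\dots$ versus $0,-2,-4,\dots$ split did in Theorem~\ref{2LK(1,m)_union_LK(1,n)} — so that the two new loop-sums and the $2n$ new edge-sums together form precisely the integers $[\alpha+p,\,\alpha+p+2n+1]$ (or a symmetric block below $\alpha$), with each value hit once. Getting the parities and offsets to mesh, and checking that the new sums do not overlap the old block $[\alpha,\alpha+p-1]$ nor each other, is the delicate part; everything after that (the global shift and the induction) is routine. A cleaner alternative, which I would try first, is to avoid the induction entirely and write a closed-form labeling of $(2s+1)LK_{1,n}$ directly: place the $2s+1$ central vertices at the ``middle'' labels and distribute the $(2s+1)n$ pendant labels so that the loop-sums and edge-sums interleave into one consecutive block — essentially the same parity argument carried out once, globally, rather than $s-1$ times.
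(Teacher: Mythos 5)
Your proposal does not contain a proof: the entire substance of the argument is deferred to step three (``choose the $2(n+1)$ new labels so that the new sums extend the old consecutive block''), which you yourself flag as the delicate part and never carry out. Worse, the inductive skeleton you build around it is unsound as stated, because the inductive step is not available for an \emph{arbitrary} super edge-magic labeling of $(2(s-1)+1)LK_{1,n}$. Concretely, take the standard labeling of $3LK_{1,1}$ with central vertices labeled $2,3,4$ and pendants $5,6,1$ respectively; the labels are $[1,6]$ and the edge sums are exactly $[4,9]$. To pass to $5LK_{1,1}$ you must add four new labels (forming, with $[1,6]$, ten consecutive integers) whose four new sums extend $[4,9]$ to ten consecutive integers, i.e.\ form a contiguous subset of $\{0,1,2,3\}\cup\{10,11,12,13\}$ adjacent to $[4,9]$. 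The two new loop sums $2c_1,2c_2$ are even, so $c_1,c_2\in\{0,1,5,6\}$; but $1,5,6$ are already used, leaving only one admissible central label. So no extension exists, and the induction collapses unless you strengthen the hypothesis to carry a very specific labeling along --- at which point you are no longer doing induction but writing a global formula.

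That global formula is exactly what the paper does, and it is what your ``cleaner alternative'' gestures at without supplying: the $2s+1$ central vertices receive the labels $s+1,\dots,3s+1$, the first layer of pendants receives $[1,s]\cup[3s+2,4s+2]$ in a prescribed interleaved order, and each subsequent layer is obtained by shifting by multiples of $2s+1$; one then checks directly via Lemma~\ref{super_consecutive} that the induced sums are consecutive. Until you either write down such a closed-form assignment and verify the sum set, or replace your inductive hypothesis with one strong enough to make the extension step provable, the argument has a genuine gap precisely where the theorem's content lies.
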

\begin{proof}
It is easy check that any bijective function $f:V(LK_{1,n})\to [1,n+1]$ is super edge-magic, for all $n \in \mathbb{N}$ (see for instance, \cite{LMP1}). Hence, we can assume that $s \in \mathbb{N}$. Let us define the graph $(2s+1)LK_{1,n}$ as follows: $V((2s+1)LK_{1,n})= \{ v_i \}^{2s+1}_{i=1} \cup \{ v_i^j \}^{2s+1}_{i=1}, j= 1,2,\ldots,n$ and
$E((2s+1)LK_{1,n})= \{ v_iv_i \}^{2s+1}_{i=1} \cup \{ v_iv_i^j \}^{2s+1}_{i=1}, j= 1,2,\ldots,n$.
Next, we show that $(2s+1)LK_{1,n}$ is super edge-magic. Consider the labeling
$f: V((2s+1)LK_{1,n}) \to [1,(n+1)(2s+1)]$ defined by
$$f(v)=\left\{\begin{array}{ll}
s+i, & v=v_i,  i \in [1,2s+1],\\
 i-s-1 , &  v=v_i^1, i\in[ s+2, 2s+1],\\
f(v_i)+(2s+1), & v=v_i^1, i \in [1,s+1],\\
f(v_i^1)+(2s+1)(j-1), & v=v_i^j,  i \in [1,s+1] \ \mbox{and} \ j \neq 1, \\
f(v_i)+(2s+1)(j-1), & v=v_i^j,  i \in [s+2,2s+1] \ \mbox{and} \ j \neq 1.
\end{array}\right.$$
Then, $f$ is a super edge-magic labeling of $(2s+1)LK_{1,n}$ .
\end{proof}
\begin{figure}[h]
\begin{center}
\includegraphics {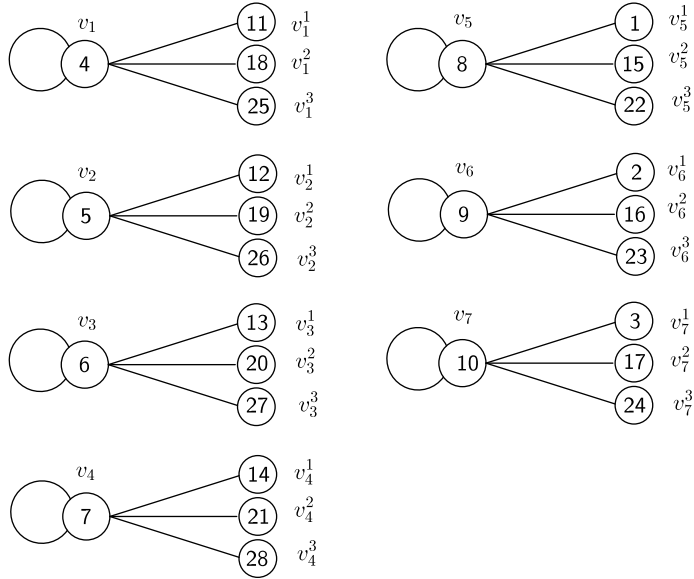}
\caption{A super edge-magic labeling of $7LK_{1,3}$.}
\label {Fig 7LK13}
\end{center}
\end{figure}
It is worth to mention that the labeling provided in the previous proof has been motivated by the technique introduced in \cite{LopMunRiu5} in order to prove that the crowns of certain cycles are perfect super edge-magic. In fact, when using this technique, there is one step in which super edge-magic labelings of $(2s+1)LK_{1,n}$ are obtained.
The next result that we want to consider is the following one.

\begin{theorem} \label{LK1,m_Union_2LK1,n_Union_2sLK1,1}
The graph $LK_{1,m} \cup 2LK_{1,n} \cup (2s)LK_{1,1}, \ s\in \mathbb{N}$ is super edge-magic.
\end{theorem}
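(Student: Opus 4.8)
The plan is to adapt the splicing idea used in the proofs of Theorems~\ref{2LK(1,1)_union_LK(1,n)}--\ref{(2s+1)LK(1,n)SEM}: start from a known super edge-magic labeling on a small ``core'' and then attach the extra pendant vertices and loop-edges using blocks of consecutive integers, keeping the induced edge-sums consecutive as required by Lemma~\ref{super_consecutive}. Concretely, the graph $G = LK_{1,m} \cup 2LK_{1,n} \cup (2s)LK_{1,1}$ has order and size $p = q = (m+1) + 2(n+1) + 2s$. I would first dispose of the degenerate case $s=0$ (or $n=0$, $m=0$) by invoking Theorem~\ref{2LK(1,m)_union_LK(1,n)} when the relevant parameters coincide, so that we may assume $s \ge 1$ and $m,n \ge 1$.

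\textbf{Step 1: labeling the core $2LK_{1,1} \cup LK_{1,m}$.} By Theorem~\ref{2LK(1,1)_union_LK(1,n)} (with the role of $n$ played by $m$) we have a super edge-magic labeling of $2LK_{1,1} \cup LK_{1,m}$ on $[1,m+5]$ whose induced edge-sums form a block of $m+5$ consecutive integers. I would single out the two central vertices of the $LK_{1,1}$-components (labeled $3$ and $5$ in the proof of Theorem~\ref{2LK(1,1)_union_LK(1,n)}) as the future central vertices of the two $LK_{1,n}$-components, and keep the remaining $2s-2$ copies of $LK_{1,1}$ to be inserted later.

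\textbf{Step 2: growing the two $LK_{1,1}$'s into $LK_{1,n}$'s.} Mirroring the proof of Theorem~\ref{2LK(1,m)_union_LK(1,n)}, attach $n-1$ new pendant vertices to each of the two central vertices labeled $3$ and $5$, assign them negative labels in two interleaved arithmetic progressions of common difference $2$ (the odd negatives to the component with central label $5$, the even non-positive ones to the component with central label $3$), and then shift every label by the appropriate positive constant so that all labels become positive and consecutive starting from $1$. This yields a super edge-magic labeling of $2LK_{1,n} \cup LK_{1,m}$; the point of the interleaving is exactly that the two families of edge-sums, together with the shifted core sums and the $n$ loop-sums, dovetail into a single consecutive block, as Lemma~\ref{super_consecutive} demands.

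\textbf{Step 3: appending the $2s$ extra loops $K_{1,1}$.} Now attach the $2s$ remaining copies of $LK_{1,1}$. Each $LK_{1,1}$ contributes two new vertices (one of which carries the loop) and two new edges (the loop and the pendant edge), so two new labels and two new induced sums per component. I would give the $2s$ central vertices labels immediately above the top of the current label set and the $2s$ leaf vertices labels immediately below the bottom (or some symmetric arrangement), chosen so that each loop-sum $2f(\text{centre})$ and each pendant-sum $f(\text{centre})+f(\text{leaf})$ extends the consecutive block of edge-sums by exactly the required amount; the complementary-labeling trick of Lemma~\ref{complementary_property} can be used to argue that we may add them symmetrically at both ends. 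Finally one verifies that the full vertex set receives $[1,p]$ bijectively and that the $q$ induced sums are $q$ consecutive integers, which by Lemma~\ref{super_consecutive} is exactly super edge-magicness.

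\textbf{Main obstacle.} The routine part is the arithmetic of the shifts; the delicate part is Step~3, namely checking that the $2s$ extra loop-and-pendant pairs can be slotted in without creating a gap or a repetition in the set of induced edge-sums --- the loop-edge of $LK_{1,1}$ forces an \emph{even} sum $2f(\text{centre})$, which is a genuine constraint and is precisely where the parity of the labels of the new central vertices must be chosen with care. I expect the cleanest route is to place the $2s$ new components in $s$ symmetric pairs using the complementary labeling, so that each pair contributes a matched pair of sums at the two ends of the block, reducing Step~3 to a single parity bookkeeping that I would state as a short sublemma before assembling the final labeling.
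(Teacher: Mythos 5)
Your Steps 1 and 2 are fine in outline (they essentially reproduce Theorems~\ref{2LK(1,1)_union_LK(1,n)} and \ref{2LK(1,m)_union_LK(1,n)} with the roles of $m$ and $n$ exchanged), but the entire difficulty of this theorem is concentrated in your Step~3, which you do not carry out: you defer it to an unstated ``short sublemma,'' and that sublemma \emph{is} the theorem. Worse, the specific plan you sketch for Step~3 --- put the $2s$ new centres just above the current top labels and the $2s$ new leaves just below the bottom, so that the new edge-sums extend the consecutive block at its two ends --- cannot work as stated. Each new $LK_{1,1}$ with centre $c$ contributes the even loop-sum $2c$; the $2s$ loop-sums are therefore $2s$ distinct even integers and span an interval of length at least $4s-2$, so the odd integers lying between them must be supplied by pendant sums $c+\ell$. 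This forces the labels of the new components to be interleaved with one another and with the core labels, not appended at the extremes of the label range. The paper's construction does exactly this interleaving: it splits the $2s$ loops into two groups of $s$, places the centres of one group in a block immediately below the three big centres with their leaves in a block near the top of the label range (and symmetrically for the other group), so that the consecutive even loop-sums of one group are interlaced by the consecutive odd pendant-sums of the other group, and the whole pattern meshes with the sums coming from $LK_{1,m}\cup 2LK_{1,n}$. Nothing in your proposal produces this interlacing.

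Two further points. First, Lemma~\ref{complementary_property} does not do what you want in Step~3: it only says that reversing an existing super edge-magic labeling (replacing $f$ by $p+1-f$) yields another one; it is not a mechanism for \emph{extending} a labeling symmetrically at both ends, so ``the complementary-labeling trick can be used to argue that we may add them symmetrically'' is not a valid step. Second, a small bookkeeping slip: after reserving two of the loops as future centres of the $LK_{1,n}$'s, the number of $LK_{1,1}$ components still to be inserted is $2s$, not $2s-2$ as written in your Step~1 (your Step~3 has the correct count). To repair the proof you must actually exhibit the labels of the $2s$ extra components and verify consecutiveness of the sums; the explicit piecewise formula in the paper (centres $s+2,\dots,2s+1$ and $2s+5,\dots,3s+4$, leaves $2,\dots,s+1$ and $3s+5,\dots,4s+4$ after the final shift) is one way to do it.
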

\begin{proof}
Let us define the graph $G = LK_{1,m} \cup 2LK_{1,n} \cup (2s)LK_{1,1}, \ s\in \mathbb{N}$ as follows: $V(G)= \{v_i \}^{2s+3}_{i=1} \cup \{v^j_1\}^n_{j=1} \cup \{v^k_2\}^m_{k=1} \cup \{v^j_3\}^n_{j=1} \cup \{v^1_i \}^{2s+3}_{i=4}$ and $E(G)=\{v_iv_i\}^{2s+3}_{i=1} \cup \{v_1v^j_1\}^n_{j=1}\cup \{v_2v^k_2\}^m_{k=1}\cup \{v_3v^j_3\}^n_{j=1} \cup \{v_iv^1_i\}^{2s+3}_{i=4}$. Consider the labeling $f: V(G) \to [-2n+3,4s+m+5]$ defined by
$$f(v)=\left\{\begin{array}{ll}
2s+1+i, & v=v_i,  i \in [1,3],\\
s-2+i , &  v=v_i, i\in[4, s+3],\\
s+1+i , &  v=v_i, i\in[s+4, 2s+3],\\
f(v_i)+(2s+3), & v=v_i^1, i \in [1,2],\\
f(v_3)-(2s+3), & v=v_3^1,\\
-2j+4, & v=v^j_1,  j \in [2,n],\\
f(v_2^1)+k-1, & v=v_2^k,  k \in [2,m], \\
-2j+3, & v=v^j_3,  j \in [2,n],\\
f(v_i)+(2s+3), & v=v_i^1,  i \in [4,s+3],\\
f(v_i)-(2s+3), & v=v_i^1,  i \in [s+4,2s+3].
\end{array}\right.$$
By adding $(2n-2)$ to each of the original labels, we obtain a super edge-magic labeling of the graph $LK_{1,m} \cup 2LK_{1,n} \cup (2s)LK_{1,1}, \ s\in \mathbb{N}$.
\end{proof}

\begin{figure}[b]
\includegraphics[scale=0.8]{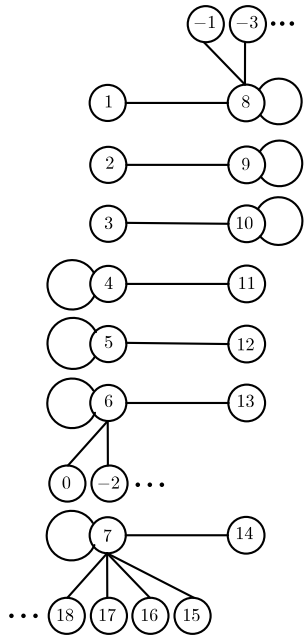}
\caption{A labeling pattern for $LK_{1,m} \cup 2LK_{1,n} \cup (2s)LK_{1,1}$.}
\label{Fig 5}
\end{figure}

Next, we introduce the concept of deer graph. Consider any caterpillar with an odd spine whose edges can be embedded in a horizontal line and the degree sequence of the vertices of the spine read the same from left to right than from right to left. If we attach a loop to the central vertex of the spine, we get a \textit{deer graph}. An example of a deer graph appears in Fig. \ref{Fig. deer}.

\begin{theorem} \label{deergraph}
All deer graphs are super edge-magic.
\end{theorem}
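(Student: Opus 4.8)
The plan is to apply Lemma~\ref{super_consecutive}: writing $p=|V(G)|$, it suffices to produce a bijection $g\colon V(G)\to[1,p]$ whose edge-sum set $\{g(u)+g(v):uv\in E(G)\}$ is a block of $p$ consecutive integers. I would first fix notation reflecting the symmetry of a deer graph. Write the spine as $u_{-k},u_{-k+1},\dots,u_{k-1},u_k$, with $u_0$ the central vertex carrying the loop, and let $a_i$ be the number of pendant leaves attached to $u_i$, so that the hypothesis reads $a_i=a_{-i}$ for all $i$. Since $G$ is a caterpillar with one extra (loop) edge, $|E(G)|=\bigl(|V(G)|-1\bigr)+1=|V(G)|=p$, which incidentally reconfirms that deer graphs have equal order and size, and $p=2k+1+\sum_{i=-k}^{k}a_i$. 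The case $k=0$ is immediate, since then $G=LK_{1,a_0}$, already known to be super edge-magic.

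For $k\ge 1$ I would work with the bipartition $(A,B)$ of the caterpillar $C$ obtained from $G$ by deleting its loop, placing each spine vertex $u_i$, together with the leaves of the spine vertices of the opposite index-parity, in the class determined by the parity of $i$; let $A$ be the class containing $u_0$ and set $\alpha=|A|$. A short count, using that the leaves on odd-index spine vertices occur in pairs $u_i,u_{-i}$ with $a_i=a_{-i}$ (hence in an even total number), shows that $\alpha$ is always odd. I would then assign $A$ the labels $[1,\alpha]$ and $B$ the labels $[\alpha+1,p]$, and in particular fix $g(u_0)=(\alpha+1)/2\in[1,\alpha]$. With this choice every non-loop edge-sum lies in $[\alpha+2,\alpha+p]$, a range of exactly $p-1$ integers matching the $p-1$ edges of $C$, while the loop contributes the sum $2g(u_0)=\alpha+1$. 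Consequently, once the remaining labels are distributed so that the $p-1$ edge-sums of $C$ are precisely $\alpha+2,\alpha+3,\dots,\alpha+p$ (each once), adjoining the loop restores the full block $\alpha+1,\alpha+2,\dots,\alpha+p$ of $p$ consecutive integers, and $g$ is super edge-magic.

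The core of the argument is thus constructing such a bipartite-consecutive super edge-magic labeling of $C$ in which $u_0$ receives the prescribed middle label $(\alpha+1)/2$. I would build it from the centre outward: label the spine vertices $u_0,u_{\pm1},u_{\pm2},\dots$ and then interleave the leaf labels drawn from $A$ and from $B$ in alternating blocks, so that each pendant edge and each spine edge falls into a prescribed slot of $[\alpha+2,\alpha+p]$. The palindromic condition $a_i=a_{-i}$ is exactly what lets the contributions of the left and right halves dovetail into a single run of sums, and it is genuinely needed: already the smallest non-palindromic analogue, a path on four vertices with a loop at its second vertex, is not super edge-magic (a short parity argument on $\sum_{uv\in E}(g(u)+g(v))$, in which the loop is counted twice, rules it out). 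The remaining steps are the bookkeeping that $g$ is a bijection onto $[1,p]$ and that the $p-1$ sums of $C$ fill $[\alpha+2,\alpha+p]$ without repetition. I expect the main obstacle to be organizing the interleaved leaf-blocks so that the sums incident with $u_0$ close up exactly when the loop-sum $\alpha+1$ is appended — which is precisely why $u_0$ is pinned to $(\alpha+1)/2$ — and to do so uniformly over all admissible leaf-sequences $(a_i)$.
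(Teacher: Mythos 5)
Your proposal is correct and is essentially the paper's own argument: the paper simply says to label the caterpillar ``in a traditional super edge-magic way,'' and your write-up supplies exactly the detail that makes this work, namely that the palindromic spine forces the central vertex to receive the middle label $(\alpha+1)/2$ of its bipartition class, so the loop sum $\alpha+1$ prepends to the run $[\alpha+2,\alpha+p]$ of caterpillar edge-sums. The remaining bookkeeping you defer is the standard consecutive-sum property of the bipartite caterpillar labeling, which the paper likewise takes for granted.
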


\begin{proof}
It suffices to label the vertices of the caterpillar in a traditional super edge-magic way.
\end{proof}

\begin{figure}[h]
\begin{center}
\includegraphics[scale=0.8]{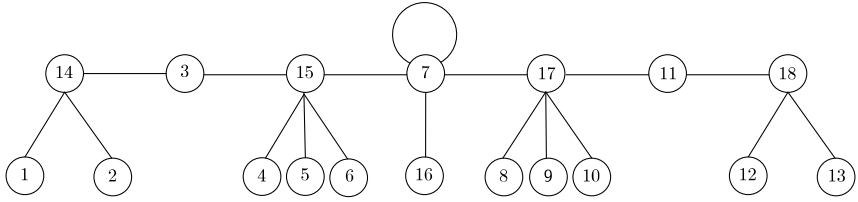}
\caption{A super edge-magic labeling of a deer graph with a spine of order 7.}
\label{Fig. deer}
\end{center}
\end{figure}
The {\it corona product} of two graphs  $G$ and $H$ is the graph $G \odot H$ obtained by placing a copy of $G$ and $|V(G)|$ copies of $H$ and then joining each vertex of $G$ with all vertices in one copy of $H$ in such a way that all vertices in the same copy of $H$ are joined exactly to one vertex of $G$. Let $\overline{K}_n$ be the complementary graph of the complete graph $K_n$, $n\in \mathbb{N}$.
Let $G$ be a graph with maximum degree $2$, by $\overrightarrow{G}$ we denote an orientation of $G$ in which all vertices have indegree $1$. The next lemma is an easy exercise.

\begin{lemma} \label{Rem_isomorphic}
If $C_k$ is a cycle with $k$ vertices, then $\hbox{und}(\overrightarrow{C_k} \otimes_h \overrightarrow{LK}_{1,n}) \cong C_k \odot \overline{K}_n$.
\end{lemma}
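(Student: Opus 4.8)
The plan is to exhibit an explicit vertex bijection between $\hbox{und}(\overrightarrow{C_k} \otimes_h \overrightarrow{LK}_{1,n})$ and $C_k \odot \overline{K}_n$ and check that it preserves adjacency in both directions. First I would unpack the $\otimes_h$-construction in this special case. Write $V(C_k)=\{a_1,\ldots,a_k\}$ with arcs $(a_i,a_{i+1})$ (indices mod $k$) in the fixed orientation $\overrightarrow{C_k}$, and let the common vertex set of the family be $V=\{0,1,\ldots,n\}$, where $0$ is the central vertex of $\overrightarrow{LK}_{1,n}$; its arc set consists of the loop $(0,0)$ together with the arcs $(0,j)$ for $j\in[1,n]$. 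Since $\overrightarrow{LK}_{1,n}$ has every vertex of indegree $1$ (vertex $0$ receives the loop, each leaf $j$ receives $(0,j)$), the function $h$ assigning this single digraph to every arc of $\overrightarrow{C_k}$ is well defined, and by definition $((a_i,s),(a_{i'},t))$ is an arc of the product iff $i'=i+1$ and $(s,t)\in E(\overrightarrow{LK}_{1,n})$, i.e.\ iff $i'=i+1$ and either $s=t=0$ or ($s=0$ and $t\in[1,n]$).

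Next I would describe the target graph $C_k\odot\overline{K}_n$: it has the $k$ cycle-vertices $a_1,\ldots,a_k$ forming $C_k$, and for each $i$ a copy of $\overline{K}_n$ (an independent set of $n$ vertices) all of whose vertices are joined only to $a_i$. The natural bijection is $\varphi(a_i,0)=a_i$ and $\varphi(a_i,j)=$ the $j$-th vertex of the $i$-th copy of $\overline{K}_n$, for $j\in[1,n]$. I would then verify adjacency preservation in the underlying graph: an edge of $\hbox{und}(\overrightarrow{C_k}\otimes_h\overrightarrow{LK}_{1,n})$ comes from an arc of the product, which by the description above is either of type $((a_i,0),(a_{i+1},0))$ — mapped to the cycle edge $a_ia_{i+1}$ — or of type $((a_i,0),(a_{i+1},t))$ with $t\in[1,n]$ — mapped to the edge joining $a_i$ to the $t$-th vertex of the $(i+1)$-st pendant copy, which is precisely an edge of the corona. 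Conversely every cycle edge $a_ia_{i+1}$ and every pendant edge (from $a_{i+1}$ to a vertex of its copy of $\overline{K}_n$, equivalently from $a_i$ via the arc $(a_i,a_{i+1})$) arises this way, and no other edges are present on either side; in particular the pendant sets stay independent because there is no arc $(s,t)$ in $\overrightarrow{LK}_{1,n}$ with $s,t\in[1,n]$, and the loop $(0,0)$ contributes arcs $((a_i,0),(a_{i+1},0))$ which give exactly the cycle edges and nothing more.

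The only point that needs a little care — and the closest thing to an obstacle — is a bookkeeping issue rather than a conceptual one: each edge $a_ia_{i+1}$ of $C_k$ together with its incident pendant copy in the corona must be matched with the arcs generated by the single arc $(a_i,a_{i+1})$ of $\overrightarrow{C_k}$, and one must make sure the pendant copy "at $a_{i+1}$" is generated by this arc (via the leaf-arcs of $\overrightarrow{LK}_{1,n}$) and is not double-counted by the arc $(a_{i+1},a_{i+2})$; this is automatic because in the product a vertex $(a_{i+1},t)$ with $t\ne 0$ has a neighbour only through arcs ending at $a_{i+1}$, of which there is exactly one, namely $(a_i,a_{i+1})$. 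Keeping the index conventions consistent throughout (which copy is attached to which cycle-vertex) makes the adjacency check go through cleanly, and since $\varphi$ is a bijection on vertex sets that induces a bijection on edge sets, it is the desired isomorphism.
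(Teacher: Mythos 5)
The paper gives no proof of this lemma at all (it is dismissed with the sentence ``The next lemma is an easy exercise''), so there is no argument of the authors' to compare against; your verification is correct and complete, and it unpacks the $\otimes_h$-product exactly as the definitions in the introduction require. The only wrinkle is the off-by-one in your indexing of the pendant copies --- the copy of $\overline{K}_n$ that ends up joined to $a_i$ is the one you label as the $(i+1)$-st --- but you flag this explicitly as a bookkeeping matter, and it is harmless because the corona is unchanged under relabelling which copy of $\overline{K}_n$ sits at which cycle vertex.
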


Combining Lemma \ref{Rem_isomorphic} with Theorems \ref{2LK(1,1)_union_LK(1,n)}, \ref{2LK(1,m)_union_LK(1,n)}, \ref{(2s+1)LK(1,n)SEM} and \ref{LK1,m_Union_2LK1,n_Union_2sLK1,1}, we get the following result.

\begin{corollary}
The following graphs are edge-magic.
\begin{enumerate}
\item[(i)] $2C_k \odot \overline{K}_1 \cup C_k \odot \overline{K}_n$,  for all $ k,n \in \mathbb{N}$,
\item[(ii)] $2C_k \odot \overline{K}_m \cup C_k \odot \overline{K}_n$,  for all $ k,m,n \in \mathbb{N}$
\item[(iii)]  $(2s+1)C_k \odot \overline{K}_n$, for all $ k,n \in \mathbb{N}$ and $s \in \mathbb{N} \cup \{ 0 \}$,
\item[(iv)] $C_k \odot \overline{K}_m \cup 2C_k \odot \overline{K}_n\cup (2s)C_k \odot \overline{K}_1,$ for all $k,s\in \mathbb{N}$.
\end{enumerate}
 In particular, if $k$ is odd, all the graphs above are super edge-magic.
\end{corollary}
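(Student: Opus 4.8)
The plan is to exhibit each of the graphs in (i)--(iv) as the underlying graph of a $\otimes_h$-product $\overrightarrow{C_k}\otimes_h\Gamma$ in which $\overrightarrow{C_k}$ plays the role of the seed $D$ of Theorem \ref{otimes_product_G_and_snk} and the family $\Gamma$ furnishes the ``hair''. The family member doing the work will always be an orientation of one of the loop-star forests that Theorems \ref{2LK(1,1)_union_LK(1,n)}, \ref{2LK(1,m)_union_LK(1,n)}, \ref{(2s+1)LK(1,n)SEM} and \ref{LK1,m_Union_2LK1,n_Union_2sLK1,1} prove to be super edge-magic; the cycle $C_k$ is only required to be edge-magic, which is what makes the product edge-magic in general and super edge-magic for odd $k$.

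First I would fix one of those four forests, write it as a disjoint union $F=LK_{1,m_1}\cup\cdots\cup LK_{1,m_r}$ of stars-with-a-loop, and equip it with the super edge-magic labeling $\lambda$ supplied by the corresponding theorem. Orienting each component so that every vertex has indegree $1$ produces $\vec F=\overrightarrow{LK}_{1,m_1}\cup\cdots\cup\overrightarrow{LK}_{1,m_r}$; because super edge-magicness depends only on $und(\vec F)$, the digraph $\vec F$ with vertices named by $\lambda$ is a super edge-magic labeled digraph. Denoting by $N$ its common order and size and by $k_0$ the least of the $N$ consecutive edge-sums of $\lambda$, we have $\vec F\in S_N^{k_0}$. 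Take $h\colon E(\overrightarrow{C_k})\to S_N^{k_0}$ to be the constant map with value $\vec F$; as noted just after the definition of the $\otimes_h$-product, $\overrightarrow{C_k}\otimes_h S_N^{k_0}$ is then the Kronecker product $\overrightarrow{C_k}\otimes\vec F$.

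Next I would identify this product. The $\otimes$-product splits over disjoint unions in the second factor, since no arc of $\overrightarrow{C_k}\otimes\vec F$ joins $V(C_k)\times V(\overrightarrow{LK}_{1,m_i})$ to $V(C_k)\times V(\overrightarrow{LK}_{1,m_j})$ for $i\ne j$; hence
$$\overrightarrow{C_k}\otimes\vec F\;\cong\;\bigcup_{i=1}^{r}\bigl(\overrightarrow{C_k}\otimes\overrightarrow{LK}_{1,m_i}\bigr),$$
and Lemma \ref{Rem_isomorphic} gives $und(\overrightarrow{C_k}\otimes\overrightarrow{LK}_{1,m_i})\cong C_k\odot\overline{K}_{m_i}$ for each $i$. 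Therefore $und(\overrightarrow{C_k}\otimes_h S_N^{k_0})\cong\bigcup_{i=1}^{r}C_k\odot\overline{K}_{m_i}$, which is exactly $2C_k\odot\overline{K}_1\cup C_k\odot\overline{K}_n$, $2C_k\odot\overline{K}_m\cup C_k\odot\overline{K}_n$, $(2s+1)C_k\odot\overline{K}_n$, or $C_k\odot\overline{K}_m\cup 2C_k\odot\overline{K}_n\cup(2s)C_k\odot\overline{K}_1$, according to which forest $F$ was picked. Since $C_k$ is edge-magic for every $k$, the digraph $\overrightarrow{C_k}$ is edge-magic and Theorem \ref{otimes_product_G_and_snk} makes each of these graphs edge-magic; and for odd $k$, where $C_k$ is super edge-magic, $\overrightarrow{C_k}$ is a super edge-magic digraph and the same theorem delivers the stronger conclusion.

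The steps that still need checking are routine: that $\otimes$ distributes over disjoint unions in the second factor and that the indegree-$1$ orientation of each $LK_{1,m_i}$ lets Lemma \ref{Rem_isomorphic} be applied componentwise; that $\vec F$, being one fixed super edge-magic labeled digraph, sits in a single family $S_N^{k_0}$, legitimizing the constant $h$; and the classical facts that every $C_k$ is edge-magic and that $C_k$ is super edge-magic precisely when $k$ is odd. The only delicate point is this last parity statement (together with the degenerate small values of $k$), but there is no real obstacle: once the seed $\overrightarrow{C_k}$ and the family member $\vec F$ are chosen --- and the statement of the corollary already dictates the choice --- the result is just the conjunction of Lemma \ref{Rem_isomorphic} and Theorem \ref{otimes_product_G_and_snk}.
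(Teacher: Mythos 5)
Your proposal is correct and follows essentially the same route as the paper: identify the product $und(\overrightarrow{C_k}\otimes_h \vec F)$ via Lemma \ref{Rem_isomorphic}, invoke the super edge-magicness of the loop-star forest from Theorems \ref{2LK(1,1)_union_LK(1,n)}--\ref{LK1,m_Union_2LK1,n_Union_2sLK1,1}, and conclude with Theorem \ref{otimes_product_G_and_snk}, using that $C_k$ is always edge-magic and super edge-magic for odd $k$. You simply spell out details (the constant $h$, the splitting of the Kronecker product over disjoint unions) that the paper leaves implicit.
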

\begin{proof}
We just prove (i), the other items can be proved similarly. By Lemma \ref{Rem_isomorphic}, $C_k \otimes_h (2LK_{1,1} \cup LK_{1,n}) \cong 2C_k \odot \overline{K}_1 \cup C_k \odot \overline{K}_n$. Combining this with Theorem \ref{2LK(1,1)_union_LK(1,n)} and Theorem \ref{otimes_product_G_and_snk}, $2C_k \odot \overline{K}_1 \cup C_k \odot \overline{K}_n$ is edge-magic. In particular, if $k$ is odd, $C_k$ is super edge-magic and hence the graph $2C_k \odot \overline{K}_1 \cup C_k \odot \overline{K}_n$ is super edge-magic.
\end{proof}

\section{Families of graphs of equal order and size which are not super edge-magic}
\label{section: familes NOT_SEM}

Gallian identifies in \cite{G} three usual reasons why graphs fail to admit labelings of certain types. The reasons are enumerated next.
\begin{enumerate}
  \item[(a)] Divisibility conditions.
  \item[(b)] Too many edges when we compare this number with the number of vertices.
  \item[(c)] Too many vertices when we compare this number with the number of edges.
\end{enumerate}

Our immediate goal is to prove a result about an infinite family of graphs that fails to be super edge-magic for different reasons than the ones enumerated above. We will prove that the all graphs of the family $LK_{1,m} \cup LK_{1,n}$ are not super edge-magic, for all positive integers $m$ and $n$. However, what we will really end up showing is that the digraph $D\cong \overrightarrow{LK}_{1,m} \cup \overrightarrow{LK}_{1,n}$ where $D$ is obtained by orienting the edges of ${LK}_{1,m} \cup {LK}_{1,n}$ in such a way that all vertices of degree 1 in ${LK}_{1,m} \cup{LK}_{1,n}$ have outdegree 0 in $D$ is not super edge-magic. We will do this using a contradiction argument.

Next, we describe some properties that the adjacency matrix of $D$ has.

\begin{lemma} \label{properties_of_digraphG}
Let $D$ be the digraph obtained from $LK_{1,m} \cup LK_{1,n}$, $m, n \in \mathbb{N}$, by orienting its edges in such a way that all vertices have indegree 1 in $D$. Then the adjacency matrix of $D$, denoted by $A(D)$ satisfies the following conditions:
\begin{enumerate}
  \item[(i)] Each column of $A(D)$  contains exactly one 1.
  \item[(ii)] All entries of $A(D)$ are either 0 or 1 and the entries 1 are located in exactly two rows of $A(D)$.
  \item[(iii)] Since each component of $D$ contains a loop, it follows that the two rows contain exactly one 1 in the main diagonal of $A(D)$.
  \end{enumerate}
\end{lemma}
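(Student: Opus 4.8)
The plan is to read all three properties directly off the structure of $LK_{1,m}\cup LK_{1,n}$ once the orientation has been fixed. First I would set up notation: let $c$ and $c'$ be the two central vertices (the ones carrying loops), and let $\ell_1,\dots,\ell_m$ and $\ell'_1,\dots,\ell'_n$ be the degree-$1$ vertices in the $LK_{1,m}$ and $LK_{1,n}$ components respectively. Since the graphs in this paper have no multiple edges, every entry of $A(D)$ is $0$ or $1$, so throughout one only has to locate the $1$'s.

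Part (i) is immediate from the definition of the adjacency matrix: the number of $1$'s in the column indexed by a vertex $v$ equals the indegree of $v$ in $D$, and the hypothesis is precisely that this indegree is $1$ for every $v$. For part (ii), the key observation is that the orientation is forced on the non-loop edges. A leaf $\ell_i$ is incident with the single edge $c\ell_i$; this edge must supply $\ell_i$ with its unique in-arc, hence it is oriented $c\to\ell_i$ and $\ell_i$ has outdegree $0$. The same applies to every $\ell'_j$. Consequently the only vertices with positive outdegree are $c$ and $c'$, so the $1$'s of $A(D)$ occur only in rows $c$ and $c'$; counting the $m+n+2$ arcs, each of which lands in one of these two rows while the $m+n+2$ columns each contain exactly one $1$, shows that both rows are genuinely used, so there are exactly two such rows.

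For part (iii), the loop at $c$ is the arc $(c,c)$, i.e.\ the entry in position $(c,c)$ on the main diagonal, and similarly the loop at $c'$ gives the diagonal entry at $(c',c')$; since no other vertex carries a loop and every non-loop edge was shown above to be oriented from a center to a leaf (so it never contributes a diagonal entry), these two are the only $1$'s on the main diagonal, and each sits in one of the two rows identified in (ii).

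I do not expect a genuine obstacle: the lemma is a bookkeeping statement about a very rigid graph. The one point that deserves a line of justification — and the only place where the hypothesis is used in a nontrivial way — is the claim that the indegree-$1$ condition forces the orientation of the pendant edges once each loop has been used to provide its center's in-arc; this single fact simultaneously yields the two-row description in (ii) and the location of the diagonal $1$'s in (iii).
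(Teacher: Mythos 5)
Your proof is correct; the paper itself states this lemma without proof, treating it as an immediate structural observation, and your argument is exactly the direct verification the authors intend (indegree~$1$ gives the column count, the forced orientation of the pendant edges localizes the $1$'s to the two central rows, and the loops give the two diagonal entries). The one nontrivial point you flag — that the pendant edges must be oriented away from the centers, since a leaf's only incident edge must serve as its in-arc — is indeed the crux, and you justify it properly.
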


Keeping all the above information in mind, we are now ready to state and prove the next result.

\begin{theorem} \label{LK(1,m)_union_LK(1,n)_notSEM}
The graph $LK_{1,m} \cup LK_{1,n}$ is not super edge-magic, for all positive integers $m$ and $n$.\end{theorem}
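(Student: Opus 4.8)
The plan is a proof by contradiction that works directly with Lemma~\ref{super_consecutive} (equivalently, with the counterdiagonals of $A(D)$ described in Lemmas~\ref{properties_SEM} and \ref{properties_of_digraphG}). Suppose $G=LK_{1,m}\cup LK_{1,n}$ admits a super edge-magic labeling $g\colon V(G)\to[1,p]$ with $p=q=m+n+2$, so that $S=\{g(u)+g(v):uv\in E(G)\}$ is a set of $q$ consecutive integers, say $S=[k,k+q-1]$; here a loop at $v$ contributes $2g(v)$. Let $a$ and $b$ be the labels of the two central vertices, and assume without loss of generality that $a<b$. Writing $L_a$ and $L_b$ for the sets of labels of the degree-$1$ vertices in the components whose centres are labelled $a$ and $b$, we have $L_a\cup L_b=[1,p]\setminus\{a,b\}$ (a disjoint union) and
$$S=\{2a,\,2b\}\cup(a+L_a)\cup(b+L_b).$$

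The first ingredient is the observation that $a+b\notin S$: none of $2a$, $2b$, $a+\ell$ with $\ell\in L_a$, or $b+\ell'$ with $\ell'\in L_b$ can equal $a+b$, since that would force $a=b$ or would force one of the central labels to belong to a set of leaf labels. Thus the block of $q$ consecutive integers $S$ misses the value $a+b$, so either $a+b<k$ or $a+b>k+q-1$. The second ingredient is a pair of crude estimates: every element of $S$ is $\ge a+1$ (because $2a\ge a+1$ and every label is at least $1$), and every element of $S$ is $\le b+p$ (because $2b\le b+p$, every label is at most $p$, and $a<b$).

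It then remains to compare lengths in the two cases. If $a+b<k$, then $S\subseteq[a+b+1,\,b+p]$, an interval containing only $p-a$ integers, whence $q\le p-a<p=q$, a contradiction. If $a+b>k+q-1$, then $S\subseteq[a+1,\,a+b-1]$, an interval containing only $b-1$ integers, whence $q\le b-1\le p-1<p=q$, again a contradiction. Hence $G$ is not super edge-magic.

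I do not anticipate a real obstacle: the mechanism is simply that the two loops pin two induced sums to the ``diagonal'' values $2a,2b$ while the value $a+b$ is forbidden, and this squeezes the consecutive block $S$ into an interval strictly shorter than $q$. The only points needing care are the degenerate ranges --- one should note $a+1\le a+b-1$ and $a+b+1\le b+p$, both immediate from $1\le a<b\le p$ --- and the remark (already made in the paper) that Lemma~\ref{super_consecutive} holds verbatim for graphs with loops. Translating into the digraph language of Lemma~\ref{properties_of_digraphG} is then routine, since each occupied counterdiagonal of $A(D)$ carries exactly one $1$ and these counterdiagonals, indexed by $g(u)+g(v)$, run through exactly the elements of $S$.
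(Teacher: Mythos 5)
Your proof is correct, and it takes a genuinely different route from the paper's. The paper passes to the digraph $D$ in which every vertex has indegree $1$, and analyses the two nonzero rows of $A(D)$: it shows each row must decompose into $k$ equally spaced blocks of $1$'s of sizes $(m+1)/k$ and $(n+1)/k$ interleaving one another, pins down the admissible column positions of the $1$'s, and derives a contradiction from the requirement that both rows carry a $1$ on the main diagonal. Your argument bypasses all of this combinatorial bookkeeping: you work directly with the sum set $S$ of Lemma~\ref{super_consecutive}, observe that the two loops force $2a,2b\in S$ while the ``mixed'' value $a+b$ can never be attained (since $b\notin L_a$ and $a\notin L_b$), and then a two-case interval count shows that a block of $q$ consecutive integers avoiding $a+b$ cannot fit between the lower bound $a+1$ and the upper bound $b+p$. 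All the estimates check out ($2a\ge a+1$, $2b\le b+p$, $a+\ell\le a+p<b+p$, $b+\ell'\ge b+1>a+1$), the degenerate ranges are handled, and the cardinality $|S|=q$ is exactly what Lemma~\ref{super_consecutive} supplies. What your approach buys is brevity and transparency; it also works verbatim when one of $m,n$ is zero, so it subsumes the paper's separate theorem that $L\cup LK_{1,n}$ is not super edge-magic. What the paper's approach buys is structural information about where $1$'s can sit in the adjacency matrix, which is the machinery it reuses for the three-component case $2L\cup LK_{1,n}$, where your ``single forbidden sum'' trick does not directly apply.
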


\begin{proof}
Assume to the contrary that $LK_{1,m} \cup LK_{1,n}$ is a super edge-magic graph and let $D$ be the digraph obtained from it by orienting its edges in such a way that all vertices in $D$ have indegree 1. By definition, $D$ is also super edge-magic. Let $f$ be a super edge-magic labeling of $D$, and let $A(D)=(a_{ij})$ be the adjacency matrix induced by $f$ where the vertices take the name of their labels in the super edge-magic labeling. By Lemma \ref{properties_of_digraphG}, all the entries 1 are located in exactly two rows of $A(D)$. Let these two row be row $i$ and row $j$ and assume that $i<j$.
If $a_{i1}=a_{i2}=\ldots=a_{il}=1$ and $a_{il+1}=0$, for some $l\ge 1$, then there is no $1$ in the diagonal with induced sum $i+l+1$, contradicting Lemma \ref{properties_SEM}.
Thus, we only have two possible generic forms for the adjacency matrix, either row $i$ is of the form
$(0  \ldots 0 1  \ldots 1 0 \ldots   \ldots 10 \ldots 0)$, with one more block of zeros than blocks of ones, or, $(0 \ldots 0 1  \ldots 10 \ldots\ldots 0  1\ldots 1)$, with exactly the same number of blocks of zeros and ones. Notice that, the first possibility is forbidden by Lemma \ref{complementary_property}, since otherwise, the adjacency matrix induced by the complementary labeling of $f^c$ would have exactly two rows with $1$ entries, namely $k,l$ with $k<l$, and row $k$ of the form $(1 \ldots 1 0  \ldots)$, which is a contradiction, as we have shown above. Hence, in what follows assume that row $i$  and row $j$ are of the form
$$(\overbrace{0 \ldots 0}^{|B_1|} T_1 \overbrace{0 \ldots 0}^{|B_2|} T_2\ldots \ldots 0T_k) \hbox{ and }(B_1 \overbrace{0 \ldots 0}^{|T_1|} B_2 \overbrace{0 \ldots 0}^{|T_2|}\ldots\ldots1 \overbrace{0 \ldots 0}^{|T_k|}),$$ respectively, where the $T_l$ and $B_l$ are blocks of $1$'s. By Lemma \ref{complementary_property}, we can assume that $\sum_{i=1}^k|T_i|=m+1$ and $\sum_{i=1}^k|B_i|=n+1$.

Notice that the block $B_1$ must be used to cover the zeros between $T_1$ and $T_2$, $B_2$ must be used to cover the zeros between $T_2$ and $T_3$ and so on. Since the length of $B_1$ must be equal to the number of zeros between $T_1$ and $T_2$ which is equal to the length of $B_2$ and so on,we get $\mid B_i \mid = (n+1)/k \ \hbox{for} \ i=1,2,\ldots, k$. A similar argument shows that $ \mid T_i \mid = (m+1)/k \ \hbox{for }  i=1,2,\ldots,k$. More over, since the first zero in the top row after $T_1$ appears in column $((m+n+2)/k)+1$, this can only be covered if the bottom row is $((m+n+2)/k)+i)$. This implies that
\begin{equation} \label{eq: differ_rows}
j-i = (m+n+2)/k
\end{equation}

On the other hand, the possible positions of $1$'s in the top row and bottom row are
$ (l'(m+n+2)- (m+1))/k + y$, $ l'=1,2, \ldots, k-1$, $y \in [1,(m+1)/k]$ and
$(l-1)(m+n+2))/k+w$, $l=1,2,\ldots, k-1$, $w \in [1,(n+1)/k],$ respectively. Since we assume that the graph is super edge-magic, each of the two rows contains a $1$ in the main diagonal. Thus, there exist $l,l',y$ and $w$ such that, $i=(l'(m+n+2)- (m+1))/k + y$ and $j=(l-1)(m+n+2))/k+w$. Hence, by(\ref{eq: differ_rows}) we obtain that $w=(2-l+l')(m+n+2)/k-(m+1)/k+y$. Since $l' \leq l$, $w$ is either negative or greater than $(n+1)/k$ which contradicts that $w \in [1,(n+1)/k]$.
\end{proof}

Let $L$ be the loop graph. Using a similar reasoning to the one used above, we are able to prove the following. result.

\begin{theorem}
The graph $L \cup LK_{1,n}$ is not super edge-magic for any $n \in \mathbb{N}$.
\end{theorem}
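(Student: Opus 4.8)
The plan is to mimic the proof of Theorem \ref{LK(1,m)_union_LK(1,n)_notSEM}, since $L$ is simply $LK_{1,0}$ and the structural obstruction should be even more rigid. First I would argue by contradiction: suppose $L \cup LK_{1,n}$ admits a super edge-magic labeling, orient all edges so that every vertex has indegree $1$, obtaining a digraph $D$ which is then also super edge-magic, and let $A(D)=(a_{ij})$ be the adjacency matrix induced by the labeling. Exactly as in Lemma \ref{properties_of_digraphG}, every column of $A(D)$ contains precisely one $1$ (each vertex has indegree $1$), the $1$'s are confined to two rows (the two vertices of positive outdegree, namely the isolated loop vertex and the centre of $LK_{1,n}$), and since each component carries a loop, each of these two rows has its own $1$ on the main diagonal. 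Call the two rows $i<j$. The loop-vertex component contributes a single $1$ (the loop) to one of these rows, while the star component contributes $n+1$ entries (the loop at the centre plus $n$ edges to the leaves) to the other row.

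Next I would invoke Lemma \ref{properties_SEM}: the counterdiagonals carrying a $1$ form a set of consecutive counterdiagonals, and no counterdiagonal carries more than one $1$. As in the cited proof, this forces each occupied row to be an alternating pattern of blocks of $0$'s and blocks of $1$'s, and by the complementary-labeling trick (Lemma \ref{complementary_property}) we may assume the ``top'' row begins with a block of zeros, so both rows decompose into $k$ blocks of $1$'s separated by the appropriate blocks of $0$'s, with the block lengths in one row matching the zero-gaps in the other. The key simplification here is that one of the two rows has total $1$-weight exactly $1$: the row arising from the isolated loop $L$ contains a single $1$, so the block decomposition of that row has $k=1$, i.e.\ it is a single $1$ with zeros on either side. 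But then the number of $0$'s to be covered in the other row forces that other row also to have $k=1$, i.e.\ to consist of a single unbroken block of $n+1$ ones. Writing $\delta=j-i$ for the separation of the two occupied rows, the diagonal-covering condition (the analogue of equation \eqref{eq: differ_rows}) then pins $\delta$ to equal $n+2$ (the total number of $1$'s plus one, which here is $(n+1)+1+1-1 = n+2$ after accounting for the single loop contribution), while each row must independently place a $1$ on the main diagonal.

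The finish is a small arithmetic contradiction exactly parallel to the end of the previous proof. The row of weight $1$ places its only $1$ on the diagonal, so that $1$ sits in column $i$ of row $i$; the unbroken block of $n+1$ ones in row $j$ occupies columns $[c,c+n]$ for some $c$, and to meet the diagonal we need $j \in [c,c+n]$. Combining $j = i+\delta$ with the constraint that the single $1$ in the short row, sitting on counterdiagonal $2i$, must lie in the consecutive range of occupied counterdiagonals determined by the long block (whose counterdiagonal sums run over an interval of length $n+1$ centred off of $2j$), I would derive that $i$ is forced outside the range in which its diagonal entry can legally sit --- i.e.\ the required column index is either non-positive or exceeds the matrix size. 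Either way this contradicts that $A(D)$ is a genuine $(n+2)\times(n+2)$ adjacency matrix, so no super edge-magic labeling exists. The main obstacle, as in the model proof, is bookkeeping: getting the indexing of block positions and counterdiagonal sums exactly right so that the final inequality genuinely closes; once $k=1$ is forced on both rows the rest is routine, and the $k=1$ reduction is immediate here because one component is just a loop.
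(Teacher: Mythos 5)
Your proposal is correct and follows essentially the same route as the paper: the paper's own proof also reduces to the observation that the two occupied rows (one carrying a single $1$ from the loop $L$, the other carrying $n+1$ ones) must each have a $1$ on the main diagonal, rules out the top row beginning with $1$'s via the consecutive-counterdiagonal property, and disposes of the remaining configuration with the complementary-labeling rotation of Lemma \ref{complementary_property}. The only difference is presentational --- the paper lists the three possible forms $(0\ldots 0 1)$, $(0 1\ldots 1)$, $(0\ldots 0 1 0\ldots 0)$ directly instead of specializing the block decomposition of Theorem \ref{LK(1,m)_union_LK(1,n)_notSEM} to $m=0$ and $k=1$ --- and your closing arithmetic ($\delta=j-i=n+2$ versus $j-i\le n+1$) is a valid, if slightly more roundabout, way to reach the same contradiction.
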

\begin{proof}
Suppose to the contrary that there exists $n \in \mathbb{N}$ such that $L \cup LK_{1,n}$ is super edge-magic and assume that each vertex is identified with the label assigned by a super edge-magic labeling. By definition, the digraph $D \cong \overrightarrow{L} \cup \overrightarrow{LK}_{1,n}$ obtained from $L \cup LK_{1,n}$ by orienting its edges in such a way that all vertices in $D$ have indegree $1$ in $D$ is also super edge-magic. The adjacency matrix of $D$ contains exactly two rows with $1$ entries, namely row $i$ and row $j$, $1\le i<j\le n+2$. If $a_{i1}=a_{i2}=\ldots=a_{il}=1$ and $a_{il+1}=0$, for some $l\ge 1$, then there is no $1$ in the diagonal with induced sum $i+l+1$, contradicting Lemma \ref{properties_SEM}.

Thus, we only have three possible generic forms for the adjacency matrix, either row $i$, $1\le i<n+2$, is of the form
a) $(0\ldots 0 1)$ or, b) $(0 1\ldots 1)$ or, c) $(0 \ldots 0 1 0 \ldots 0)$. Notice that, a) and b) are not possible since each of the two rows should contain a $1$ in the main diagonal, which is not possible in any of the two configurations. Finally, (c) is forbidden by Lemma \ref{complementary_property}, since otherwise, the adjacency matrix induced by the complementary labeling of $f^c$ would have exactly two rows with $1$ entries, namely $k,l$ with $k<l$, and row $k$ of the form $(1 \ldots 1 0 1 \ldots 1)$, which is a contradiction, as we have shown above.
\end{proof}

Out next immediate goal is to study the super edge-magicness of the graph $2L \cup LK_{1,n}$.

\begin{theorem} \label{2L_union_LK(1,n)_not_SEM}
The graph $2L\cup LK_{1,n}$ is not super edge-magic for any $n \in \mathbb{N}$.
\end{theorem}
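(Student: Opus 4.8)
The plan is to argue by contradiction, in the same spirit as the proofs of Theorem~\ref{LK(1,m)_union_LK(1,n)_notSEM} and of the two theorems preceding this one. Assume $2L\cup LK_{1,n}$ is super edge-magic and orient its edges so that every vertex has indegree~$1$: orient each loop, and orient the $n$ non-loop edges of $LK_{1,n}$ away from the central vertex. This yields a super edge-magic digraph $D$ with $p=q=n+3$. Fix a super edge-magic labeling $f$ of $D$ as provided by Lemma~\ref{super_consecutive}, and let $A(D)=(a_{ij})$ be the adjacency matrix obtained by naming each vertex after its $f$-label. The first step is to read off the shape of $A(D)$: every column carries exactly one $1$, and the only rows carrying $1$'s are the rows $r_1<r_2$ of the two loop vertices of $2L$ (each with a single $1$, on the main diagonal) and the row $c$ of the central vertex of $LK_{1,n}$ (a $1$ on the main diagonal together with $n$ further $1$'s, in the columns of the leaves). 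Since $\{r_1,r_2,c\}$ and the leaf labels partition $[1,n+3]$, the $1$'s of row $c$ occupy exactly the columns $[1,n+3]\setminus\{r_1,r_2\}$.

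Next I would invoke Lemma~\ref{super_consecutive} (equivalently Lemma~\ref{properties_SEM}) in the form: the set of counter-diagonal sums
\[
S=\{2r_1,2r_2\}\cup\bigl([c+1,c+n+3]\setminus\{c+r_1,c+r_2\}\bigr)
\]
must be a set of exactly $n+3$ consecutive integers. (These are the three loop sums $2r_1,2r_2,2c$ together with the $n$ leaf sums $c+\ell$; the contributions $\{2c\}\cup\{c+\ell:\ell\text{ a leaf}\}$ make up precisely $[c+1,c+n+3]$ with the two holes $c+r_1,c+r_2$.) Put $I=[c+1,c+n+3]$. Then $S$ is an interval of length $n+3$ obtained from the interval $I$ of the same length by deleting $\{c+r_1,c+r_2\}$ and inserting $\{2r_1,2r_2\}$; in particular $2r_1\ne 2r_2$ and neither $2r_i$ equals a surviving element of $I$. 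Since $S\supseteq I\setminus\{c+r_1,c+r_2\}$, the two intervals $S$ and $I$ share at least $n+1$ of their $n+3$ elements, so $S=[\,c+1+t,\ c+n+3+t\,]$ for some integer $t$ with $|t|\le 2$. Replacing $f$ by the complementary labeling $f^{c}$ if necessary --- which by Lemma~\ref{complementary_property} merely reflects $A(D)$ and changes $t$ into $-t$ --- I may assume $t\in\{0,1,2\}$.

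It then remains to rule out these three values, and each collapses at once. If $t=0$ then $S=I$, and (using that the $n+3$ sums are distinct) this forces $\{2r_1,2r_2\}=\{c+r_1,c+r_2\}$; matching up the elements of these two sets gives either $2r_i=c+r_i$, whence $r_i=c$, or $2r_1=c+r_2$ and $2r_2=c+r_1$, whence $3(r_1-r_2)=0$ --- both impossible. If $t=1$ then $I\setminus S=\{c+1\}$ and $S\setminus I=\{c+n+4\}$; the first equality forces $r_1=1$ and shows that the other deleted point $c+r_2$ was reinserted, i.e.\ $c+r_2\in\{2r_1,2r_2\}$, while the second forces $c+n+4\in\{2r_1,2r_2\}$, so $\{2r_1,2r_2\}=\{c+r_2,c+n+4\}$. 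But $2r_1=2$ (as $r_1=1$) would then equal $c+r_2$ or $c+n+4$, whereas $r_1=1$ makes $\{r_2,c\}$ a two-element subset of $[2,n+3]$, forcing $c+r_2\ge 5$ and $c+n+4\ge 7$ --- a contradiction. Finally, if $t=2$ then $I\setminus S=\{c+1,c+2\}$ and $S\setminus I=\{c+n+4,c+n+5\}$, which forces $\{r_1,r_2\}=\{1,2\}$ and hence $\{2r_1,2r_2\}=\{2,4\}=\{c+n+4,c+n+5\}$, absurd since $c+n+4\ge 6$. Therefore $D$, and with it $2L\cup LK_{1,n}$, admits no super edge-magic labeling.

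The points needing care are the bookkeeping of the first paragraph --- identifying the forced entries of $A(D)$ and noting that row $c$ is full except in columns $r_1,r_2$ --- and the two structural reductions of the second paragraph, namely that $S$ is a translate of $I$ by at most $2$ and that $f^{c}$ may be used to restrict to $t\ge 0$. Once these are in hand, eliminating $t\in\{0,1,2\}$ is routine parity-and-size arithmetic, which I expect to be the easiest part.
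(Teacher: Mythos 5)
Your proof is correct, but it takes a genuinely different route from the paper's. The paper works entirely with the shape of the adjacency matrix: it encodes $A(D)$ as a vector $(v_1,v_2,\dots)$ recording which row carries the $1$ in each column, and then splits into three cases according to which row contains the initial run $v_1=\dots=v_k$ (row $1$, an interior row --- necessarily the star's central row --- or row $n+3$), deriving a contradiction in each case from the consecutive-diagonals condition of Lemma~\ref{properties_SEM} together with Lemma~\ref{complementary_property}. You instead compute the edge-sum set explicitly: writing $r_1<r_2$ for the two isolated loop labels and $c$ for the central label, you observe that $S=\{2r_1,2r_2\}\cup\bigl([c+1,c+n+3]\setminus\{c+r_1,c+r_2\}\bigr)$, so $S$ differs from the interval $I=[c+1,c+n+3]$ in at most two elements and must therefore be a translate of $I$ by some $t$ with $|t|\le 2$; the complementary labeling reduces to $t\in\{0,1,2\}$, and each value dies by elementary arithmetic (your bounds $c+r_2\ge 5$, $c+n+4\ge 7$, etc.\ all check out, as does the identity $3(r_1-r_2)=0$ in the $t=0$ case). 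Your argument is arguably cleaner: it needs only Lemma~\ref{super_consecutive} applied directly to the graph (the digraph and adjacency-matrix scaffolding in your first paragraph is not really used), it replaces the paper's somewhat delicate combinatorial case analysis on block positions with a transparent "interval minus two points plus two points" computation, and it localizes all the work into three short arithmetic checks. What the paper's approach buys is uniformity with the preceding two non-existence proofs (for $LK_{1,m}\cup LK_{1,n}$ and $L\cup LK_{1,n}$), which are all phrased in the same matrix-block language; your method would also handle those cases but would have to be set up separately for each.
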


\begin{proof}
Assume to the contrary that there exists a $n\in\mathbb{N}$ such that $2L \cup LK_{1,n}$ is super edge-magic. Following the same idea that we used in the two previous results, we consider the digraph $D \cong 2\overrightarrow{L} \cup \overrightarrow{LK}_{1,n}$ where the star is oriented as in the previous two proofs.  By definition, $2L \cup LK_{1,n}$ is super edge-magic if and only if the digraph $D \cong 2\overrightarrow{L} \cup \overrightarrow{LK}_{1,n}$ is super edge-magic. Let $f$ be a super edge-magic labeling of D, and let $A(D)=(a_{ij})$ be the adjacency matrix induced by $f$ where the vertices take the name of their labels in the super edge-magic labeling. Since all vertices of $D$ have indegree $1$, we can represent the adjacency matrix of $D$ by a vector $(v_1,\ldots, v_n)$ such that $v_k=i$ if and only if $a_{ik}=1$.
There are three possible generic forms.

{\it Case 1}: Let $v_1=v_2=\ldots=v_k=1$, $1 \leq k \leq n+1$ and $v_{k+1}>1$. Then there is no $1$ in the diagonal with induced sum $k+2$, contradicting Lemma \ref{properties_SEM}.

{\it Case 2}: Let $i$ be such that $a_{1i}=1$. By case 1, $i>1$. Let $v_1=v_2=\ldots=v_k=i$ and $v_{k+1} \neq i$ , $2 \leq k \leq n+1$ and $2 \leq i \leq n+2$. Since each row with entries being $1$ has a $1$ in the main diagonal and $i>1$, the other two rows of the adjacency matrix represent the two loops. Let $v_{k+1}$ and $v_l, k+2\leq l \leq n+3$ be the components that represent the two loops. Then, $\min\{v_{k+1},v_l\}>v_k$. If $v_l>v_{k+1}$, then there is no $1$ in the diagonal with induced sum $i+k+1$. If $v_l < v_{k+1}, v_l\neq i$, then $v_k < v_l < v_{k+1}, k+2\leq l \leq n+2$. This implies that one of the two rows representing the loop components cannot have a 1 in the main diagonal. Hence we get a contradiction in each of the above possible scenarios.

{\it Case 3}: Let $v_1=v_2=\ldots=v_k=n+3$, $1 \leq k \leq n+1$. Notice that this must be a part of the component $LK_{1,n}$ otherwise there is no $1$ in the main diagonal. In particular, this implies that $v_{n+3}=1$, which in view of Lemma \ref{complementary_property} and case 1, is also a contradiction.
\end{proof}

\section{Conclusions and open questions}
In this paper, we have proved the families $2LK_{1,1} \cup LK_{1,n}$, $2LK_{1,m} \cup LK_{1,n}$, $(2s+1)LK_{1,n}$, $LK_{1,m} \cup 2LK_{1,n} \cup (2s)LK_{1,1}$ $\forall m,n,s \in \mathbb{N}$ and deer graphs are super edge-magic. We have also proved the families $LK_{1,m} \cup LK_{1,n}$, $L \cup LK_{1,n}$, $2L \cup LK_{1,n}$ are not super edge-magic. From what we have seen so far, the following open questions raise naturally.

\begin{openquestion}
Characterize the set of super edge-magic graphs of the form $nL \cup nLK_{1,s}$.
\end{openquestion}

Next, we propose the most general open question, although we think that it may be a really hard question to solve.

\begin{openquestion}
Characterize the set of super edge-magic graphs whose components are isomorphic to loops and graphs that are stars with a loop attached at their central vertices.
\end{openquestion}

\begin{openquestion}
For which values of $s \in \mathbb{N}$ the graph $(2s)LK_{1,n}$ is super edge-magic?
\end{openquestion}
We feel that these graphs are of great importance  since they have equal order and size. Therefore, the $\otimes_h$-product can be applied to generate further families of (super) edge-magic graphs.


\begin{thebibliography}{xxxxx}
\bibitem{AH}{Acharya, B.D., Hegde, S.M.}, Strongly indexable graphs, \emph{Discrete Math.} \textbf{93}, 123--129 (1991).
\bibitem{BaMi} { Ba$\check{c}$a, M., Miller, M.}, Super Edge-Antimagic Graphs,  BrownWalker Press, Boca Raton, (2008).
\bibitem{E}{ Enomoto, H., Llad\'{o}, A., Nakamigawa, T., Ringel, G.}, Super edge-magic graphs, \emph{SUT J. Math}.  \textbf{34}, 105--109 (1998).
\bibitem{F2}{Figueroa-Centeno, R.M., Ichishima, R., Muntaner-Batle, F.A.}, The place of super edge-magic labelings among other classes of labelings, \emph{Discrete Math.}  \textbf{231} (1--3), 153--168 (2001).
\bibitem{F1}{ Figueroa-Centeno, R.M., Ichishima, R., Muntaner-Batle, F.A., Rius-Font, M.}, Labeling generating matrices, \emph{J. Comb. Math. and Comb. Comput.}  {\bf 67}, 189--216 (2008).
\bibitem{G}{ Gallian, J.A.}, A dynamic survey of graph labeling, \emph{Electron. J. Combin.} {18}, $\sharp ${DS6} (2015).
\bibitem{HamImrKla}{R. Hammarck and W. Imrich and S. Klav$\check{z}$ar}, Handbook of Product Graphs, CRC Press, Boca Raton, FL, (2011).
\bibitem{MhMi}{M. Haviar and M. Iva\v{s}ka}, Vertex labellings of simple graphs, Heldermann Verlag, Germany, (2015).
\bibitem{ILMR} { Ichishima, R., L\'opez, S.C., Muntaner-Batle, F.A., Rius-Font, M.}, The power of digraph products applied to labelings,
\emph{Discrete Math.}  {\bf 312}, 221-228 (2012). 
\bibitem{KotRos70}{ Kotzig, A., Rosa, A.}, Magic valuations of finite graphs,
\emph{Canad. Math. Bull.}  {\bf 13}, 451--461 (1970).
\bibitem{LopMun15.a} { \sc S.C. L\'opez, F.A. Muntaner-Batle}, Langford sequences and a product of digraphs, \emph{Eur. J. Comb.}, \textbf{53} (2016), 86-95.
\bibitem{SlMb} { \sc S.C. L\'opez, F.A. Muntaner-Batle}, Graceful, Harmonious and Magic type labelings: Relations and techniques, Springer, New York, (2017).
\bibitem{LMP1} { \sc S.C. L\'opez, F. A. Muntaner-Batle and M. Prabu}, Perfect (super) edge-magic crowns, \emph{Results in Math.}, \textbf{71}(2017), 1459-1471.
\bibitem{LopMunRiu1}  { L\'opez, S.C., Muntaner-Batle, F.A., Rius-Font, M.}, Bi-magic and other generalizations of super edge-magic labelings, \emph{Bull. Aust. Math. Soc}. {\bf 84}, 137--152 (2011).
\bibitem{LopMunRiu5} {L\'opez, S.C., Muntaner-Batle, F.A., Rius-Font, M.}, Perfect super edge-magic graphs, \emph{Bull. Math. Soc. Sci. Math. Roumanie}\textbf{55} (103) No 2, 199--208 (2012).
\bibitem{LopMunRiu8} {L\'opez, S.C., Muntaner-Batle, F.A., Rius-Font, M.}, Labeling constructions using digraph products, \emph{Discrete Applied Math.}  \textbf{161}, 3005--3016 (2013).
 \bibitem{LopMunRiu13b} { \sc S.C. L\'opez, F.A. Muntaner-Batle, M. Rius-Font}, The jumping knight and other (super) edge-magic constructions, \emph{ Mediterr. J. Math.} {\bf 11} (2014) 217--235.
\bibitem{Wa} {A. M. Marr and W. D. Wallis}, Magic graphs, Birkha\"{u}ser, New york, (2013).
\bibitem{skolem} { T. Skolem}, Some remarks on triple systems f steiner, \emph{ Math. Scand.} {\bf 6} (1958) 273--280.
\end{thebibliography}
\end{document}